\newtheorem{theorem}{Theorem}
\newtheorem{lemma}{Lemma}
\newtheorem{remark}{Remark}
\newcommand{\ve}[1]{\mathbf{#1}}
\def\BibTeX{{\rm B\kern-.05em{\sc i\kern-.025em b}\kern-.08em
    T\kern-.1667em\lower.7ex\hbox{E}\kern-.125emX}}
\begin{document}

\title{On Subsampled Quantile Randomized Kaczmarz\\
\thanks{JH is grateful to and acknowledges support from NSF DMS \#2211318. ER was partially supported by NSF DMS \#2309685.}
}

\author{\IEEEauthorblockN{Jamie Haddock}
\IEEEauthorblockA{\textit{Department of Mathematics} \\
\textit{Harvey Mudd College}\\
Claremont, CA, USA \\
jhaddock@g.hmc.edu}
\and
\IEEEauthorblockN{Anna Ma}
\IEEEauthorblockA{\textit{Department of Mathematics} \\
\textit{University of California, Irvine}\\
Irvine, CA, USA \\
anna.ma@uci.edu}
\and
\IEEEauthorblockN{Elizaveta Rebrova}
\IEEEauthorblockA{\textit{Department of ORFE} \\
\textit{Princeton University}\\
Princeton, NJ, USA \\
elre@princeton.edu}
}

\maketitle

\begin{abstract}
When solving noisy linear systems $A\ve{x} = \ve{b} + \ve{c}$, the theoretical and empirical performance of stochastic iterative methods, such as the Randomized Kaczmarz algorithm, depends on the noise level. However, if there are a small number of highly corrupt measurements, one can instead use quantile-based methods to guarantee convergence to the solution $\ve{x}$ of the system, despite the presence of noise. Such methods require the computation of the entire residual vector, which may not be desirable or even feasible in some cases. In this work, we analyze the sub-sampled quantile Randomized Kaczmarz (sQRK) algorithm for solving large-scale linear systems which utilize a sub-sampled residual to approximate the quantile threshold. We prove that this method converges to the unique solution to the linear system and provide numerical experiments that support our theoretical findings. We additionally remark on the extremely small sample size case and demonstrate the importance of interplay between the choice of quantile and subset size. 
\end{abstract}


\section{Introduction}
With the computational advances of recent decades, the size of datasets regularly analyzed and employed in learning pipelines has skyrocketed.  However, with the growth of these available datasets has come the risk of unperceived yet devastating perturbations and alterations to the input data.  The presence of corruption, outliers, adversarial noise or perturbations can be entirely disruptive to data analysis results or machine learning models~\cite{brendel2017decision}, all while the input data is so large that end users cannot inspect for spurious results~\cite{kurakin2016adversarial,madry2017towards}.  The need for robust methods to corruption, outliers, and adversarial noise has only expanded in recent years and is increasingly the focus across numerous subfields of numerical linear algebra, optimization, statistics, and machine learning.  Furthermore, these methods should be simple to implement, accompanied by strong theoretical guarantees, and flexible to various applications.

Simple iterative methods like those taught in introductory numerical analysis, numerical linear algebra, and numerical optimization courses are prime candidates for corruption robust methods.  The information calculated in-iteration to provide the iterative update can often additionally yield information about the geometry of the problem, the trustworthiness of data, and nearness and existence of a solution.  It has become common to aggregate information across multiple iterations to attempt to mitigate the effect of benign noise~\cite{RKavg20, bach2014adaptivity}. Still, variants using this information to avoid the devastating effects of adversarial corruption in the problem-defining data are newer and less well-understood~\cite{HNRS20, steinerberger2021quantile, pmlr-v108-shah20a, diakonikolas2019sever}.

\section{Problem setup and related literature}

In this work, we consider solving linear problems where a few measurements or components have been corrupted.
Problems in which a small number of untrustworthy data can have a devastating effect on a variable of interest have been considered in ~\cite{awasthi2014power,lai2016agnostic,charikar2017learning,diakonikolas2019robust}.
In particular, linear problems with a small number of outlier measurements have led to an interest in methods for robust linear regression~\cite{rousseeuw1984least,vivsek2006least,NIPS2017_e702e51d}.
Other relevant work includes \emph{min-k loss SGD}~\cite{pmlr-v108-shah20a}, robust SGD~\cite{diakonikolas2019sever,prasad2020robust}, and Byzantine approaches~\cite{blanchard2017machine,alistarh2018byzantine}.

Specifically, our setting here is as follows: consider the setting in which one is given a full rank matrix $A \in \mathbb{R}^{m \times n}$ ($m \gg n$) and a vector $\ve{b} + \ve{c} \in \mathbb{R}^m$, where $A\ve{x} = \ve{b}$ is an overdetermined consistent system with solution $\ve{x}$ and $\ve{c}$ is a sparse corruption vector. Assume the number of corruptions is no more than a fraction $\beta \in (0,1)$ of the total number of measurements, $\|\ve{c}\|_{\ell_0} \le \beta m$ and define the set of corrupted equations to be $C = \text{supp}(\ve{c}) \subset \{ 1,2,\dots,m \}.$ We won't assume any structure or distribution of the corruptions apart from the sparsity.


This setting was successfully tackled in recent years via iterative methods that can integrate corruption-avoiding strategies into the iterate's design. One popular and convenient method for solving large-scale linear systems is the Randomized Kaczmarz (RK) method \cite{Kac37:Angenaeherte-Aufloesung, strohmer2009randomized} that solves the system by iterative projections into the individual solution hyperplanes, namely,
\[ \ve x_{k+1}=\ve x_k+\frac{b_{i}-\langle \ve x_k,\ve a_i\rangle}{\|\ve a_i\|^2}\ve a_i. \]
In the consistent overdetermined case, its expected convergence is given by
    $$\mathbb{E}\|\ve{x}_k - \ve{x}\|^2 \le r^k \|\ve{x}_0 - \ve{x}\|^2,$$
    where $r = 1-\sigma^2_{\min}(A)/\|A\|_F^2$.

A simple observation that is of utmost importance for the corrupt setting is that each iteration of the RK algorithm has the step length, $\| \ve{x}_{k+1} - \ve{x}_k\|$, of one of the \emph{residuals}, that is, the distance from the current iterate ${\ve x_k}$ to one of the solution hyperplanes. In \cite{haddock2019randomized}, it was proposed to consider the residual information to identify the ``far apart" corrupted hyperplanes. This method, however, only works when $\ve{x}_k$ is closer to the true solution $\ve{x}_*$ than all other corrupted hyperplanes, and this is achievable only with a very low corruption rate. The next idea was presented in \cite{HNRS20} to consider the residual's stable statistics (quantiles) to identify the hyperplanes that are farther apart from $\ve{x}_k$ and do not use them for the current step only. This method successfully works for the systems with nearly $50\%$ corruption rate (note that $50\%$ is the theoretical threshold for identification of an adversarial corrupted subsystem). It is proven to converge at the per iteration rate of the same order as RK on the uncorrupted system, given that $\beta$ is a small enough constant.

Some follow-up works considered the case when sparse corruptions are mixed with small noise \cite{jarman2021quantilerk, zhang2022quantile}, and gave an alternative approach for the convergence proof \cite{steinerberger2021quantile}. A major practical hurdle of QuantileRK is a very slow exploration phase: to find the quantile estimate; one has to look at all $m$ (recall that $m \gg n$) residuals that are updated at each next iteration ${\bf x}_k$. In \cite{cheng2022block}, the authors propose a way to average over a significant portion of the residual thus using the information obtained on the exploration stage more efficiently. But the question \emph{how the exploration stage can be done faster} remained. In the original QuantileRK paper, the algorithm naturally allows to compute the quantile using only a subset of the residual (\cite{HNRS20}[Algorithm 1]), but the effect of the subset $T$ is never analyzed. Some empirical analysis of using subresidual of the size $0.2m$ is provided in \cite{jarman2021quantilerk}.

In this paper, we give the first explicit analysis of the influence of residual subsampling on the convergence of the QuantileRK method. We give theoretical guarantees (Theorem~\ref{thm:main}) for the case when we subsample a fraction of the total set of equations $\alpha m$ and complement our study with extensive experimental data and the heuristical analysis of the convergence when the sample size is even smaller and have only constant amount of the residuals.

\section{Sub-sampled Quantile Randomized Kaczmarz}

The Sub-sampled Quantile Randomized Kaczmarz (sQRK) algorithm reduces the computational cost of QRK by selecting only a subset of equations to evaluate the residual and estimate the quantile.  For $0 < q < 1$, the $q$-th quantile of a set $S$ is defined as $$\text{$q$-quant}(S) := s \in S \text{ such that } |\{r \in S: r \le s\}| = \lfloor q|S| \rfloor.$$ At every iteration, a subset $\tau_k \subset [m]$ is uniformly selected such that $|\tau_k| \ge \alpha m$ for some $\alpha > 0$. The quantile is then evaluated using the subset $\tau_k$ and a randomly selected equation whose residual is smaller than the quantile is selected to project into. The pseudo-code for sQRK is provided in Algorithm~\ref{alg:sqrk}.

\begin{algorithm}
	\caption{Sub-Sample Quantile Randomized Kaczmarz}\label{alg:sqrk}
	\begin{algorithmic}[1]
		\Procedure{sQRK}{$\ve{A},\hat{\ve{b}}$, $q$, $\alpha$, $N$}
		\State{$\ve{x}_1 = \ve{0}$}
		\For{$k = 1, \ldots, N$}
		\State{Sample $\tau_k \subset [m]$ uniformly such that $|\tau_k| = \lceil \alpha m \rceil $}
		\State{$\gamma_k = \text{$q$-quant}\left(\{| \langle \ve{a}_j,  \ve{x}_{k-1} \rangle - \hat{b}_j | \}_{j \in \tau_k }\right) $}
            \State{$B_k = \{i \in \tau_k : | \langle \ve{a}_i,  \ve{x}_{k-1} \rangle - \hat{b}_i | < \gamma_k \}$}
            \State{Sample $i \sim \text{Unif}\left(B_k\right)$}
		\State{$\ve{x}_{k+1} = \ve{x}_{k} + (\hat{b}_i - \langle \ve{a}_i,  \ve{x}_{k} \rangle ) \ve{a}_i^T$ }
		\EndFor{}
		\Return{$\ve{x}_N$}
		\EndProcedure
	\end{algorithmic}
\end{algorithm}

We define two key sets associated with the algorithm:
\begin{itemize}
    \item $B_k = \{i \in \tau_k : | \langle \ve{a}_i,  \ve{x}_{k-1} \rangle - \hat{b}_i | < q\text{-quant}(S) \}$ is the set of all accepted equations in the sample, and
    \item $S_k:= C \cap B_k$, the set of corrupted equations that can be selected after sampling and applying the quantile threshold.
\end{itemize}
It will be useful to bound the sizes of the sets $B_k$ and $S_k$ and their difference.  First, we note the simple facts that $ B_k \subset \tau_k$ and $S_k \subset  C$. Thus,
\begin{equation}
    |B_k| \ge \alpha q m, \quad \quad |S_k| \le \beta m, \label{eq:BkSksize}
\end{equation}
and so
\begin{equation}
   |B_k\setminus S_k| \ge (\alpha q - \beta)m. \label{eq:BSdiff}
\end{equation}
For the main analysis, we assume that $\alpha q - \beta \gg 0$,  that is, we are guaranteed to have enough uncorrupt equations in any random sample. See Section~\ref{small-samples} for the setting when this does not hold.

\section{Theoretical Guarantees}
In this section, we provide theoretical guarantees for the sQRK algorithm. Theorem~\ref{thm:main} presents our main results, which show that, in expectation, sQRK converges linearly to the solution of the consistent linear system $A\ve{x} = \ve{b}$, despite only having access to $\hat{\ve{b}} = \ve{b} + \ve{c}$. To prove Theorem~\ref{thm:main}, we first prove, in Lemma~\ref{lem:quantilebound}, an upper bound on the quantile of the residual for a subset of selected equations. Using the quantile bound, the expected error is then controlled by conditioning on the event of selecting a corrupt equation in Lemma~\ref{lem:corruptProjection} and the event of selecting a non-corrupt equation in Lemma~\ref{lem:noisyProjection}.

Our main result depends on the factor
$$\sigma_{\alpha, q, \beta, \text{min}}(A) = \min_{S \subset [m], \;|S|/m \ge \alpha q - \beta} \inf_{\ve{x} \not= \ve{0}} \frac{\|A_S \ve{x}\|}{\|\ve{x}\|}.$$
This term will govern the convergence of sQRK conditioned on sampling uncorrupted equations.  With this factor, we can state our main result.

\begin{theorem}
    Let $A \in \mathbb{R}^{m \times n}$ with $m>n$ be a row-normalized, full rank matrix, $\ve{x} \in \mathbb{R}^n$ be fixed, and $\ve{b} = A\ve{x}$. Let $\ve{x}_k$ denote the iterates of Algorithm~\ref{alg:sqrk} applied to matrix $A$ and measurements $\hat{\ve{b}} = \ve{b} + \ve{c}$ where the corruption vector $\ve{c}$ satisfies $\lVert \ve{c} \rVert_{0}=\beta m$. Using quantile $q$, sampling rate $\alpha$, and assuming $\alpha(1-q) > \beta$ and $\alpha q > \beta$,
    if
    \begin{equation}\label{uniform_sn}
    r_G < \frac{1 - \frac{\beta}{\alpha q}\tilde{r}_C}{1 - \frac{\beta}{\alpha q}}
    \end{equation}
    where
    $$\tilde{r}_C = \bigg(1+\frac{2}{\sqrt{\beta m}}\frac{\sigma_{\text{\normalfont{max}}}^2(A)}{\sqrt{m[\alpha(1-q)-\beta]}}+\frac{\sigma_{\text{\normalfont{max}}}^2(A)}{m[\alpha(1-q)-\beta]}\bigg)$$
    and $$r_G = 1 - \frac{\sigma_{\alpha,q,\beta,\text{\normalfont{min}}}^2}{\alpha qm},$$
    then sQRK converges at least linearly in expectation,
    $$\mathbb{E}\|\ve{x}_k - \ve{x}\|^2 \le r^k \|\ve{x}_0 - \ve{x}\|^2$$
    where $r = \left(1 - \frac{\beta}{\alpha q}\right) r_G +  \frac{\beta}{\alpha q} \tilde{r}_C.$
    \label{thm:main}
\end{theorem}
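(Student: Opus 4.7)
The plan is to establish a one-step contraction $\mathbb{E}[\|\ve{x}_{k+1}-\ve{x}\|^2 \mid \mathcal{F}_k] \le r \|\ve{x}_k - \ve{x}\|^2$, where $\mathcal{F}_k$ is the $\sigma$-algebra generated by the history through iteration $k$, and then iterate to get the advertised geometric rate. Because step~7 of Algorithm~\ref{alg:sqrk} samples $i$ uniformly from $B_k$, the natural decomposition of the conditional expectation is to split on whether $i$ lands in $S_k$ (corrupt projection) or in $B_k\setminus S_k$ (uncorrupt projection), then invoke the three preparatory lemmas advertised above the theorem.

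First I would compute the branch probabilities. Since $i \sim \text{Unif}(B_k)$, one has $\mathbb{P}(i \in S_k \mid \mathcal{F}_k) = |S_k|/|B_k| \le \beta/(\alpha q)$ by~\eqref{eq:BkSksize}, with complementary probability at least $1-\beta/(\alpha q)$. Under the reasonable assumption $r_G \le 1 \le \tilde{r}_C$, the map $p \mapsto (1-p)r_G + p\tilde{r}_C$ is monotone in $p$, so upper-bounding by the extremal value $p = \beta/(\alpha q)$ gives the tightest admissible rate.

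On the uncorrupt branch I would apply Lemma~\ref{lem:noisyProjection}: for row-normalized $\ve{a}_i$ with $\hat{b}_i = b_i$, the Kaczmarz identity yields $\|\ve{x}_{k+1}-\ve{x}\|^2 = \|\ve{x}_k-\ve{x}\|^2 - \langle \ve{a}_i,\ve{x}_k-\ve{x}\rangle^2$, so uniform averaging over $i\in B_k\setminus S_k$ produces the factor $\|A_{B_k\setminus S_k}(\ve{x}_k-\ve{x})\|^2/|B_k\setminus S_k|$. Invoking~\eqref{eq:BSdiff} to certify $|B_k\setminus S_k|/m\ge \alpha q-\beta$ activates the uniform spectral lower bound $\sigma^2_{\alpha,q,\beta,\min}$ in the definition preceding the theorem, and the crude upper bound $|B_k\setminus S_k|\le |B_k|\le \lceil\alpha qm\rceil$ converts the denominator to $\alpha qm$, yielding the rate $r_G$. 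On the corrupt branch I would use Lemma~\ref{lem:quantilebound}, which gives $\gamma_k \le \sigma_{\max}(A)\|\ve{x}_k-\ve{x}\|/\sqrt{m(\alpha(1-q)-\beta)}$ by the counting argument that at least $(\alpha(1-q)-\beta)m$ indices above $\gamma_k$ in $\tau_k$ are uncorrupt, hence their true squared residuals sum to at most $\|A(\ve{x}_k-\ve{x})\|^2 \le \sigma_{\max}^2\|\ve{x}_k-\ve{x}\|^2$. Expanding $\|\ve{x}_{k+1}-\ve{x}\|^2$ along a corrupt step, bounding the cross term via Cauchy--Schwarz against the $\ell_2$ norm of the corrupt residual vector ($\le \gamma_k\sqrt{|S_k|}$) and against $\sigma_{\max}\|\ve{x}_k-\ve{x}\|$, dividing by $|S_k|$, and substituting the quantile bound gives the three-term expression $\tilde{r}_C$ (Lemma~\ref{lem:corruptProjection}).

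Combining the two branches produces $\mathbb{E}[\|\ve{x}_{k+1}-\ve{x}\|^2\mid \mathcal{F}_k]\le r\|\ve{x}_k-\ve{x}\|^2$ with the claimed $r$; iterating and taking total expectations finishes the argument, and~\eqref{uniform_sn} is precisely the rearrangement of $r<1$. I expect the main obstacle to be the corrupt branch, because $\gamma_k$ depends on both $\ve{x}_k$ and the random subsample $\tau_k$, so one must interlock Lemma~\ref{lem:quantilebound} with the Cauchy--Schwarz estimate so that the mixed factors $1/\sqrt{\beta m}$ and $1/\sqrt{m(\alpha(1-q)-\beta)}$ land in the right places in $\tilde{r}_C$. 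A secondary subtlety is that the spectral bound in $r_G$ must hold \emph{uniformly} over all admissible realizations of $B_k\setminus S_k$, which is exactly why $\sigma^2_{\alpha,q,\beta,\min}$ is defined by a minimum over all sufficiently large index sets rather than a fixed one.
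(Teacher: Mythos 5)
Your route is the paper's route: condition on whether the uniformly chosen $i\in B_k$ lands in $S_k$ or in $B_k\setminus S_k$, apply Lemma~\ref{lem:noisyProjection} on the uncorrupt branch and Lemma~\ref{lem:corruptProjection} on the corrupt branch, bound the branch probabilities via \eqref{eq:BkSksize}, and iterate with the tower property; your sketches of the three supporting lemmas and the observation that \eqref{uniform_sn} is just the rearrangement of $r<1$ all match the paper.

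There is, however, one step that fails as written: the claim that the corrupt branch ``gives the three-term expression $\tilde r_C$.'' Lemma~\ref{lem:corruptProjection} actually yields the rate $r_C$ of \eqref{eq:corrupterr}, whose cross term carries the factor $2/\sqrt{|S_k|}$; since $|S_k|\le \beta m$, this means $r_C \ge \tilde r_C$, so you cannot replace $r_C$ by $\tilde r_C$ and then argue by monotonicity of $p\mapsto(1-p)r_G+p\tilde r_C$ with the corrupt-branch rate held fixed --- the rate itself \emph{grows} as $|S_k|$ shrinks, in the opposite direction from the probability weight. The fix (which is what the paper does) is to bound the weighted product rather than the rate alone: after using $|S_k|/|B_k|\le |S_k|/(\alpha q m)$ and $r_C\ge r_G$, the one-step bound becomes, as a function of $s=|S_k|$,
\[
r_G+\frac{s}{\alpha q m}\Bigl(1+\tfrac{\sigma_{\max}^2(A)}{m[\alpha(1-q)-\beta]}-r_G\Bigr)+\frac{2\sqrt{s}}{\alpha q m}\cdot\frac{\sigma_{\max}^2(A)}{\sqrt{m[\alpha(1-q)-\beta]}},
\]
which is nondecreasing in $s$ because $r_G\le 1$; evaluating at $s=\beta m$ converts $2/\sqrt{s}$ into $2/\sqrt{\beta m}$ and produces exactly $r=\bigl(1-\frac{\beta}{\alpha q}\bigr)r_G+\frac{\beta}{\alpha q}\tilde r_C$. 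With that correction your argument is complete and coincides with the paper's.
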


\begin{remark} Note that condition \eqref{uniform_sn} ensures that the rate $r <1$ and the method is indeed convergent. A simple calculation shows that it is equivalent to the condition
$$
\frac{\beta}{\alpha q} + \beta\frac{ \sigma_{\max}^2(A)}{\sigma^2_{\alpha,q, \beta,\min}}\left(\frac{2}{\sqrt{\beta}\sqrt{d}} + \frac{1}{d}\right) < 1
$$
for $d = \alpha(1-q) - \beta$.
See also Figure~\ref{fig:hypothesis_heatmap} that explores joint admissible values for the parameters $\alpha$ and $q$.
\end{remark}
The proofs of Theorem~\ref{thm:main} and its supporting lemmas are similar to those that appear in \cite{steinerberger2021quantile} but are not immediate results of the previous work. In particular, we pay special attention to the impact of the sub-sampling rate $\alpha$ here. Recall that $B_k$ denotes the ``acceptable" set of equations after sampling and using the quantile at iteration $k$ with
\begin{align*}
    \gamma_k(\ve{x}_{k-1}) &= \text{$q$-quant}\left(\{| \langle \ve{a}_j,  \ve{x}_{k-1} \rangle - \hat{b}_j | \}_{j \in \tau_k }\right), \\
    \text{and } B_k &= \{i \in \tau_k : | \langle \ve{a}_i,  \ve{x}_{k-1} \rangle - \hat{b}_i | < \gamma_k \}.
\end{align*}
The set $S_k= C \cap B_k$ this is the set of corrupted equations that can be selected after sampling and using the quantile. If $S_k = \emptyset$, we can be sure that we are projecting onto an uncorrupted equation at iteration $k$.

\begin{lemma}\label{lem:quantilebound}
Assume $|S_k|\geq 1$.
Let $\alpha, q > 0$ and assume $\alpha(1-q) > \beta$, then for any arbitrary vector $\ve v\in\mathbb{R}^{n}$ and for all $k$:
\[\gamma_k({\bf v}) \leq\frac{\sigma_{\text{\normalfont{max}}}(A)}{\sqrt{m[\alpha(1-q) - \beta]}}\lVert \ve v - \ve x \rVert. \]
\end{lemma}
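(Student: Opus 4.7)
The plan is to exploit the fact that for any uncorrupted index $j \notin C$, the residual $|\langle \ve{a}_j, \ve{v}\rangle - \hat{b}_j|$ collapses to $|\langle \ve{a}_j, \ve{v} - \ve{x}\rangle|$. I would identify a large subset of uncorrupted indices in $\tau_k$ whose residuals exceed $\gamma_k(\ve{v})$, and then bound their sum of squares via the spectral norm of the corresponding row-submatrix of $A$.

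The first step is a counting argument. By the definition of the $q$-quantile, exactly $\lfloor q|\tau_k|\rfloor$ elements of $\tau_k$ have residuals $\leq \gamma_k(\ve{v})$, so at least $|\tau_k| - \lfloor q|\tau_k|\rfloor \geq (1-q)|\tau_k| \geq (1-q)\alpha m$ residuals in $\tau_k$ strictly exceed $\gamma_k(\ve{v})$. Since at most $|\tau_k \cap C| \leq \beta m$ of these indices can be corrupted, at least $(\alpha(1-q) - \beta)m$ uncorrupted indices $j \in \tau_k \setminus C$ satisfy $|\langle \ve{a}_j, \ve{v}\rangle - \hat{b}_j| > \gamma_k(\ve{v})$. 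Call this set $U'_k$; the assumption $\alpha(1-q) > \beta$ guarantees it is nonempty.

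The second step converts this into the desired bound. For $j \in U'_k$, $\hat{b}_j = b_j = \langle \ve{a}_j, \ve{x}\rangle$, so the residual equals $|\langle \ve{a}_j, \ve{v} - \ve{x}\rangle|$. Since each such residual exceeds $\gamma_k(\ve{v})$,
\[ |U'_k|\cdot \gamma_k(\ve{v})^2 \;\leq\; \sum_{j \in U'_k} |\langle \ve{a}_j, \ve{v} - \ve{x}\rangle|^2 \;=\; \|A_{U'_k}(\ve{v} - \ve{x})\|^2 \;\leq\; \sigma_{\max}^2(A)\,\|\ve{v} - \ve{x}\|^2. \]
Dividing through by $|U'_k| \geq m(\alpha(1-q) - \beta)$ and taking square roots yields the stated bound.

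The proof is mostly bookkeeping, and I do not expect a genuine obstacle. The only delicate point is tracking the floor in the quantile definition and the ceiling in $|\tau_k| = \lceil \alpha m \rceil$, both of which behave favorably: $\lfloor q|\tau_k|\rfloor \leq q|\tau_k|$ ensures the $(1-q)\alpha m$ lower bound on the number of large residuals, and the hypothesis $\alpha(1-q) > \beta$ ensures that subtracting off the at-most-$\beta m$ corrupted indices still leaves a positive count, so the denominator in the final bound is strictly positive. The assumption $|S_k| \geq 1$ is not actively used in this quantile bound itself; it identifies the regime in which the lemma will later matter (when some corrupted equation passes the filter).
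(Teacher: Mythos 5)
Your proof is correct and follows essentially the same argument as the paper: count the at least $(\alpha(1-q)-\beta)m$ uncorrupted indices in $\tau_k$ whose residuals are at least $\gamma_k(\ve v)$, use $\hat{b}_j = \langle \ve a_j, \ve x\rangle$ to rewrite those residuals as $|\langle \ve a_j, \ve v - \ve x\rangle|$, and bound their sum of squares by $\sigma_{\max}^2(A)\|\ve v - \ve x\|^2$. Your side remarks (the floor/ceiling bookkeeping and the observation that $|S_k|\ge 1$ is not actually used here) are accurate.
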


\begin{proof}  We begin by considering non-corrupt equations: $i \not\in C$ where
\[\langle \ve a_i, \ve x \rangle = \hat{b}_{i} = b_{i}. \]
\noindent Taking the sum of the squared residuals for non-corrupt equations, we get
\begin{align*}
\sum\limits_{\substack{i=1 \\ i\notin C}}^{m}|\langle \ve a_i,\ve v\rangle - \hat{b}_{i}|^2
& \le \lVert A_{\notin C}\ve v - \ve b_{\notin C} \rVert^2 \\
& = \lVert A_{\notin C}\ve v - A_{\notin C}\ve x \rVert^2  \leq \sigma^2_{\text{\normalfont{max}}}(A)\lVert \ve v - \ve x \rVert^2.
\end{align*}

At least $\alpha(1-q)m$ of the $\alpha m$ values $\{|\langle \ve v, \ve a_i\rangle - \hat{b}_{i}|\}_{i\in \tau_k}$ are at least $\gamma_k(\ve{v})$ and since $|S_k|\leq|C |\leq \beta m$, at least $\alpha(1-q)m - \beta m$ belong to equations that have not been corrupted. Thus,
\begin{align*} \gamma_k^2(\ve v) m[\alpha(1-q)-\beta] & \leq\sum_{\substack{i \in \tau_k \\ i\notin C}}| \langle \ve a_i,\ve v \rangle - \hat{b}_{i}|^2 \\
& \leq \sigma_{\text{\normalfont{max}}}^2(A)\lVert \ve v - \ve x \rVert^2.
\end{align*}
and therefore
\[ \gamma_k(\ve v) \leq\frac{\sigma_{\text{\normalfont{max}}}(A)}{\sqrt{m[\alpha(1-q)-\beta]}}\lVert \ve v - \ve x \rVert. \]
\end{proof}

\begin{lemma}\label{lem:corruptProjection}
The expected approximation error over the set of acceptable corrupted equations $S_k$ is

\begin{equation*}
\mathbb{E}_{i\in S_{k}}\rVert \ve x_{k+1}-\ve x\lVert^2 \leq r_C \|\ve{x}_k - \ve{x}\|^2,
\end{equation*}
where
\begin{equation}
r_C = 1+\frac{2}{\sqrt{|S_{k}|}}\frac{\sigma_{\text{\normalfont{max}}}^2(A)}{\sqrt{m[\alpha(1-q)-\beta]} } +\frac{\sigma_{\text{\normalfont{max}}}^2(A)}{m [\alpha(1-q)-\beta]}.
\label{eq:corrupterr}
\end{equation}
\end{lemma}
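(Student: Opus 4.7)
The plan is to expand the squared error after the Kaczmarz update, decompose the residual into its corrupt and uncorrupt contributions, apply the quantile bound from Lemma~\ref{lem:quantilebound} to control the residual magnitude, and then take expectation over $i\in S_k$ with a Cauchy--Schwarz step that produces the $1/\sqrt{|S_k|}$ factor.

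More precisely, first I would write the Kaczmarz update (recalling that $A$ is row-normalized, so $\|\ve a_i\|=1$) and expand
\begin{equation*}
\|\ve x_{k+1}-\ve x\|^2 = \|\ve x_k-\ve x\|^2 + 2(\hat b_i-\langle \ve a_i,\ve x_k\rangle)\langle \ve a_i,\ve x_k-\ve x\rangle + (\hat b_i-\langle \ve a_i,\ve x_k\rangle)^2.
\end{equation*}
Writing $r_i:=\hat b_i-\langle \ve a_i,\ve x_k\rangle$ and using $\hat b_i=b_i+c_i$ with $b_i=\langle \ve a_i,\ve x\rangle$, one gets $\langle \ve a_i,\ve x_k-\ve x\rangle=c_i-r_i$, and substituting this into the cross term would collapse the expression to the convenient form
\begin{equation*}
\|\ve x_{k+1}-\ve x\|^2 = \|\ve x_k-\ve x\|^2 + r_i^2 + 2 r_i\langle \ve a_i,\ve x_k-\ve x\rangle.
\end{equation*}

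Next, I would exploit the fact that any $i\in S_k\subset B_k$ satisfies $|r_i|<\gamma_k$, and apply Lemma~\ref{lem:quantilebound} with $\ve v=\ve x_k$ to obtain $|r_i|\leq M\|\ve x_k-\ve x\|$ for $M:=\sigma_{\text{max}}(A)/\sqrt{m[\alpha(1-q)-\beta]}$. This immediately handles the pure $r_i^2$ term by producing the summand $M^2\|\ve x_k-\ve x\|^2=\frac{\sigma_{\text{max}}^2(A)}{m[\alpha(1-q)-\beta]}\|\ve x_k-\ve x\|^2$. Taking the uniform expectation $\mathbb{E}_{i\in S_k}$ of the full identity, the work reduces to bounding $\mathbb{E}_{i\in S_k}\,r_i\langle \ve a_i,\ve x_k-\ve x\rangle$.

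The key, and most delicate, step is this last cross term: a naive bound would give $M\sigma_{\text{max}}(A)\|\ve x_k-\ve x\|^2$, too large by a factor $\sqrt{|S_k|}$. Instead I would apply Cauchy--Schwarz with respect to the uniform measure on $S_k$,
\begin{equation*}
\mathbb{E}_{i\in S_k}\,r_i\langle \ve a_i,\ve x_k-\ve x\rangle \leq \bigl(\mathbb{E}_{i\in S_k} r_i^2\bigr)^{1/2}\bigl(\mathbb{E}_{i\in S_k}\langle \ve a_i,\ve x_k-\ve x\rangle^2\bigr)^{1/2},
\end{equation*}
then bound the first factor by $M\|\ve x_k-\ve x\|$ via the quantile bound, and the second by rewriting the average as $\frac{1}{|S_k|}\|A_{S_k}(\ve x_k-\ve x)\|^2\leq \frac{\sigma_{\text{max}}^2(A)}{|S_k|}\|\ve x_k-\ve x\|^2$. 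This is exactly where the $1/\sqrt{|S_k|}$ appears, and it is the nontrivial piece of the argument: the sub-sampling gain is not per-index but arises from averaging $\langle \ve a_i,\ve x_k-\ve x\rangle^2$ against the spectral norm of $A_{S_k}$.

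Assembling the three contributions $\|\ve x_k-\ve x\|^2$, $\mathbb{E} r_i^2$, and $2\mathbb{E} r_i\langle \ve a_i,\ve x_k-\ve x\rangle$ and factoring out $\|\ve x_k-\ve x\|^2$ would then yield exactly the rate $r_C$ in \eqref{eq:corrupterr}, completing the proof.
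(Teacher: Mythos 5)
Your proposal is correct and follows essentially the same route as the paper's proof: expand the squared error after the projection step, bound the squared residual term by $\gamma_k^2$ via Lemma~\ref{lem:quantilebound}, and control the cross term with a Cauchy--Schwarz step (yours phrased over the uniform measure on $S_k$, the paper's over the raw sums, which is algebraically identical) combined with $\|A_{S_k}(\ve x_k-\ve x)\|\le\sigma_{\max}(A)\|\ve x_k-\ve x\|$ to produce the $1/\sqrt{|S_k|}$ factor. The intermediate identity $\langle \ve a_i,\ve x_k-\ve x\rangle=c_i-r_i$ you mention is true but unused; everything else assembles to exactly the stated $r_C$.
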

\begin{proof} Recall the definition of $\ve x_{k+1}$ from Algorithm~\ref{alg:sqrk} is
\[ \ve x_{k+1}=\ve x_k+\left(\hat{b}_{i}-\langle \ve x_k,\ve a_i\rangle\right)\ve a_i. \]
The approximation error can be written as
\begin{equation}
 \lVert \ve x_{k+1} -\ve x\rVert^2=\lVert \ve x_k-\ve x\rVert^2 +2\langle \ve x_k-\ve x,\ve v\rangle +\lVert \ve v\rVert^2,
 \label{eq:corrupt-sum}
 \end{equation}
where
\[ \ve v=(\hat{b}_{i}-\langle \ve x_k,\ve a_i\rangle) \ve a_i .\]
We proceed by bounding the last two terms of \eqref{eq:corrupt-sum}.
For $i\in S_{k}$, the term $\lVert \ve v \rVert^2$ is uniformly small since
\begin{align}
{\lVert \ve v \rVert^2} &= \lVert (\hat{b}_{i}-\langle \ve x_k,\ve a_i\rangle) \ve a_i \rVert^2 = |\hat{b}_{i} -\langle \ve x_k,\ve a_i\rangle |^2  \nonumber\\
&\leq \gamma_k^2(\ve x_k) \nonumber\\
&\leq \frac{\sigma^2_{\text{\normalfont{max}}}(A)}{m[\alpha(1-q) - \beta]}\lVert \ve x_k - \ve x \rVert^2, \label{eq:corrupt-term3}
\end{align}
where the second to last inequality follows from $S_k \subseteq B_k$ and the last inequality follows from Lemma~\ref{lem:quantilebound}.

It remains to bound $\mathbb{E}_{i\in S_{k}}2\langle \ve x_k - \ve x,\ve v\rangle$. We begin by showing
\begin{align*}
\mathbb{E}_{i\in S_{k}}2 \langle \ve x_k-\ve x,\ve v\rangle
&= \frac{2}{|S_{k}|}\sum_{i\in S_{k}}\langle \ve x_k-\ve x,(\hat{b}_{i}-\langle \ve x_k,\ve a_i\rangle)\ve a_i\rangle \\
&= \frac{2}{|S_{k}|}\sum_{i\in S_{k}}(\hat{b}_{i}-\langle \ve x_k,\ve a_i\rangle)\langle \ve x_k- \ve x, \ve a_i \rangle
\end{align*}
The Cauchy-Schwarz inequality yields
\begin{align*}
    \sum_{i\in S_{k}}(\hat{b}_{i}-&\langle \ve x_k,\ve a_i\rangle)\langle \ve x_k-\ve x,\ve a_i\rangle \\
    &\leq \Bigg(\sum_{i\in S_{k}}(\hat{b}_{i}-\langle \ve x_k,\ve a_i\rangle)^2 \sum_{i\in S_{k}}\langle \ve x_k-\ve x,\ve a_i\rangle^2\Bigg)^{\frac{1}{2}},
\end{align*}
and thus
\begin{align*}
\mathbb{E}_{i\in S_{k}}&2\langle \ve x_k-\ve x, \ve v\rangle \\&\leq \frac{2}{|S_{k}|}\Bigg(\sum_{i\in S_{k}}(\hat{b}_{i}-\langle \ve x_k,\ve a_i\rangle)^2 \sum_{i\in S_{k}}\langle \ve x_k-\ve x,\ve a_i\rangle^2\Bigg)^{\frac{1}{2}} \\
&\leq \frac{2}{|S_{k}| } \sqrt{|S_{k}|}\frac{ \sigma_{\text{\normalfont{max}}}(A)}{\sqrt{m[\alpha(1-q)-\beta]}} \lVert \ve x_k - \ve x \rVert\\
& \quad \quad \quad \quad \quad \quad \quad \quad \quad \quad \times   \bigg(\sum_{i\in S_{k}}\langle \ve x_k-\ve x,\ve a_i\rangle^2\bigg)^{\frac{1}{2}}.
\end{align*}

 At this point, we estimate
\begin{align*}
\sum_{i\in S_{k}}\langle \ve x_k-\ve x,\ve a_i\rangle^2 &\leq \sum_{i=1}^{m} \langle \ve x_k-\ve x,\ve a_i\rangle^2 \\
& = \lVert A(\ve x_k-\ve x)\rVert^2\leq\sigma_{\text{\normalfont{max}}}^2(A)\lVert \ve x_k-\ve x\rVert^2,
\end{align*}
and hence
\begin{align}
\mathbb{E}_{i\in S_{k}} 2\langle \ve x_k-\ve x,v\rangle&\leq\frac{2}{\sqrt{|S_{k}|}}\frac{\sigma_{\text{\normalfont{max}}}^2(A) }{\sqrt{m[\alpha(1-q)-\beta]}} \lVert \ve x_k-\ve x\rVert^2 \label{eq:corrupt-term2}.
\end{align}
Taking the expectation over $i \in S_k$ in \eqref{eq:corrupt-sum} and using the bounds \eqref{eq:corrupt-term3} and \eqref{eq:corrupt-term2} obtains the final result:
\begin{equation*}
\mathbb{E}_{i\in S_{k}}\rVert \ve x_{k+1}-\ve x\lVert^2\leq r_C\lVert \ve x_k-\ve x\rVert^2,
\end{equation*}
where $r_C$ is as defined in \eqref{eq:corrupterr}.
\end{proof}

Now we consider the subset $B_k\setminus S_{k}$ and the event in which we project onto an uncorrupted equation.

\begin{lemma} \label{lem:noisyProjection}
Let $A \in \mathbb{R}^{m \times n}$ with $m>n$ be a row-normalized, full rank matrix, $\ve{x} \in \mathbb{R}^n$ be fixed, and $\ve{b} = A\ve{x}$. Let $\ve{x}_k$ denote the iterates of Algorithm~\ref{alg:sqrk} applied to matrix $A$ and measurements $\hat{\ve{b}} = \ve{b} + \ve{c}$ where the corruption vector $\ve{c}$ satisfies $\lVert \ve{c} \rVert_{0}=\beta m$. Using quantile $q$, sampling rate $\alpha$, and assuming $\alpha q > \beta$, and conditioning on the case that the sampled row index is uncorrupted yields at least linear convergence in expectation with
\[ \mathbb{E}_{i\in B_k \setminus S_{k}}\lVert \ve x_{k+1}-\ve x\rVert^2\leq r_G \lVert \ve x_k-\ve x\rVert^2,\]
where
\begin{equation}
    r_G = 1-\frac{\sigma_{\alpha,q,\beta,\text{\normalfont{min}}}^2(A)}{\alpha qm}.
\end{equation}
\end{lemma}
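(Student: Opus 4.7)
The plan is to reduce to the classical one-step identity for Randomized Kaczmarz on an uncorrupted row and then average over the good subset $B_k\setminus S_k$ to invoke the definition of $\sigma_{\alpha,q,\beta,\min}(A)$. Since every $i\in B_k\setminus S_k$ is uncorrupted, $\hat b_i = b_i = \langle \ve a_i,\ve x\rangle$, and the row-normalized update yields the standard pointwise identity
\[
\|\ve x_{k+1}-\ve x\|^2 = \|\ve x_k-\ve x\|^2 - \langle \ve a_i,\ve x_k - \ve x\rangle^2,
\]
so no Cauchy--Schwarz or cross-term manipulations are needed here, in contrast to Lemma~\ref{lem:corruptProjection}.

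Taking the uniform expectation over $i\in B_k\setminus S_k$ converts the sum of squared inner products into $\|A_{B_k\setminus S_k}(\ve x_k-\ve x)\|^2 / |B_k\setminus S_k|$. For the numerator, \eqref{eq:BSdiff} gives $|B_k\setminus S_k|/m \ge \alpha q-\beta$, so $B_k\setminus S_k$ is admissible in the infimum defining $\sigma_{\alpha,q,\beta,\min}(A)$, yielding
\[
\|A_{B_k\setminus S_k}(\ve x_k-\ve x)\|^2 \ge \sigma_{\alpha,q,\beta,\min}^2(A)\,\|\ve x_k-\ve x\|^2.
\]
For the denominator, the quantile construction forces $|B_k|\le \alpha q m$ (at least an $\alpha(1-q)$-fraction of residuals on $\tau_k$ sit at or above $\gamma_k$, exactly as was counted in the proof of Lemma~\ref{lem:quantilebound}), and hence $|B_k\setminus S_k|\le \alpha q m$.

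Assembling these three estimates delivers $\mathbb{E}_{i\in B_k\setminus S_k}\|\ve x_{k+1}-\ve x\|^2 \le r_G\|\ve x_k-\ve x\|^2$ with $r_G = 1-\sigma_{\alpha,q,\beta,\min}^2(A)/(\alpha q m)$, as claimed. I do not expect a substantive obstacle: the whole argument is bookkeeping on top of the uncorrupted-RK identity. The only point of care is the two-sided control on $|B_k\setminus S_k|$---the lower bound $(\alpha q - \beta)m$ is what licenses the use of $\sigma_{\alpha,q,\beta,\min}$, while the upper bound $\alpha q m$ is what produces the denominator in $r_G$---and the hypothesis $\alpha q > \beta$ is precisely what guarantees that $B_k\setminus S_k$ is nonempty.
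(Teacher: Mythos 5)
Your argument is correct and is essentially the paper's own proof: the paper defers to \cite[Lemma 3]{steinerberger2021quantile} with $B,S$ replaced by $B_k,S_k$, which is exactly the uncorrupted-row RK identity averaged over $B_k\setminus S_k$, with the lower bound $|B_k\setminus S_k|\ge(\alpha q-\beta)m$ licensing $\sigma_{\alpha,q,\beta,\min}$ and the upper bound $|B_k\setminus S_k|\le\alpha qm$ (the paper phrases it as $\lVert A_{B_k\setminus S_k}\rVert_F^2\le\alpha qm$, the same thing for row-normalized $A$) producing the denominator in $r_G$. No gaps.
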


The proof of this lemma follows directly from the proof of \cite[Lemma 3]{steinerberger2021quantile} where $B$ and $S$ are substituted by $B_k$ and $S_k$, and we use the estimate $\|A_{B_k\setminus S_k}\|_F^2 \le \alpha q m$.

\begin{proof}[Proof of Theorem~\ref{thm:main}]
By the Law of Total Expectation and noting that $\mathbb{P}(i \not\in B_k) = 0$, we have
\begin{align*}
    \mathbb{E} \|\ve{x}_{k+1} - \ve{x}\|^2 &= \mathbb{P}(i \in B_k\setminus S_k) \mathbb{E}_{i \in B_k\setminus S_k} \|\ve{x}_{k+1} - \ve{x}\|^2 \\
    & \quad \quad+ \mathbb{P}(i \in S_k) \mathbb{E}_{i \in S_k} \|\ve{x}_{k+1} - \ve{x}\|^2 \\
    &\le \frac{\alpha qm - |S_k|}{\alpha qm} r_G \lVert \ve x_k-\ve x\rVert^2 \\
    & \quad \quad \quad + \frac{|S_k|}{\alpha qm} r_C \lVert \ve x_k-\ve x\rVert^2 \\ 
    &\le \Bigg[r_G + \frac{\beta}{\alpha q} (\tilde{r}_C - r_G) \Bigg] \|\ve{x}_k - \ve{x}\|^2
    \\&= r \|\ve{x}_k - \ve{x}\|^2,
\end{align*}
where we have used the fact that $r_C - r_G \ge 0$ and $0 \le |S_k| \le \beta m$ in the second inequality to bound $\frac{|S_k|}{\alpha qm} (r_C - r_G)$ and to conclude that $\frac{|S_k|}{\alpha qm} r_C \le \frac{\beta}{\alpha q} \tilde{r}_C$.

\end{proof}

\section{Numerical Experiments}

The experiments presented in this section were performed in MATLAB R2021b on a MacBook Pro 2019 with a 2.3 GHz 8-Core Intel Core i9 processor and 16 GB 2667 MHz DDR4 RAM.

\subsection{Checking assumptions}

In Figure~\ref{fig:hypothesis_heatmap}, we plot a heatmap indicating which hyperparameter sets $(q,\alpha)$ satisfy the assumptions of Theorem~\ref{thm:main}, $\alpha(1-q) > \beta$, $\alpha q > \beta$, and $$ r_G < \frac{1 - \frac{\beta}{\alpha q}\tilde{r}_C}{1 - \frac{\beta}{\alpha q}}.$$  Here, our system is defined by a matrix $A \in \mathbb{R}^{50000 \times 100}$ which is generated with i.i.d.\ $\mathcal{N}(0,1)$ entries and then row-normalized.  We approximate $\sigma_{\alpha,q,\beta,\min}(A)$ by taking 100 uniform random samples of index subsets of size at least $\lceil (\alpha q - \beta)m \rceil$ and recording the minimum singular value encountered in these submatrices.  The heatmap indicates ``T" (true) if all three assumption hold for the indicated $(q,\alpha)$ pair and value of $\beta$ and ``F" (false) otherwise.  We provide heatmaps for $\beta = 10^{-5}$ (upper left), $\beta = 10^{-4}$ (upper right), $\beta = 10^{-3}$ (lower left), and $\beta = 10^{-2}$ (lower right).  We note that $|C| = 0$ for $\beta = 10^{-5}$, $|C| = 5$ for $\beta = 10^{-4}$, $|C| = 50$ for $\beta = 10^{-3}$, and $|C| = 500$ for $\beta = 10^{-2}$.  As expected, the region of pairs $(q, \alpha)$ satisfying the assumptions is larger for smaller corruption rate~$\beta$.

\begin{figure}
    \includegraphics[width=0.5\columnwidth]{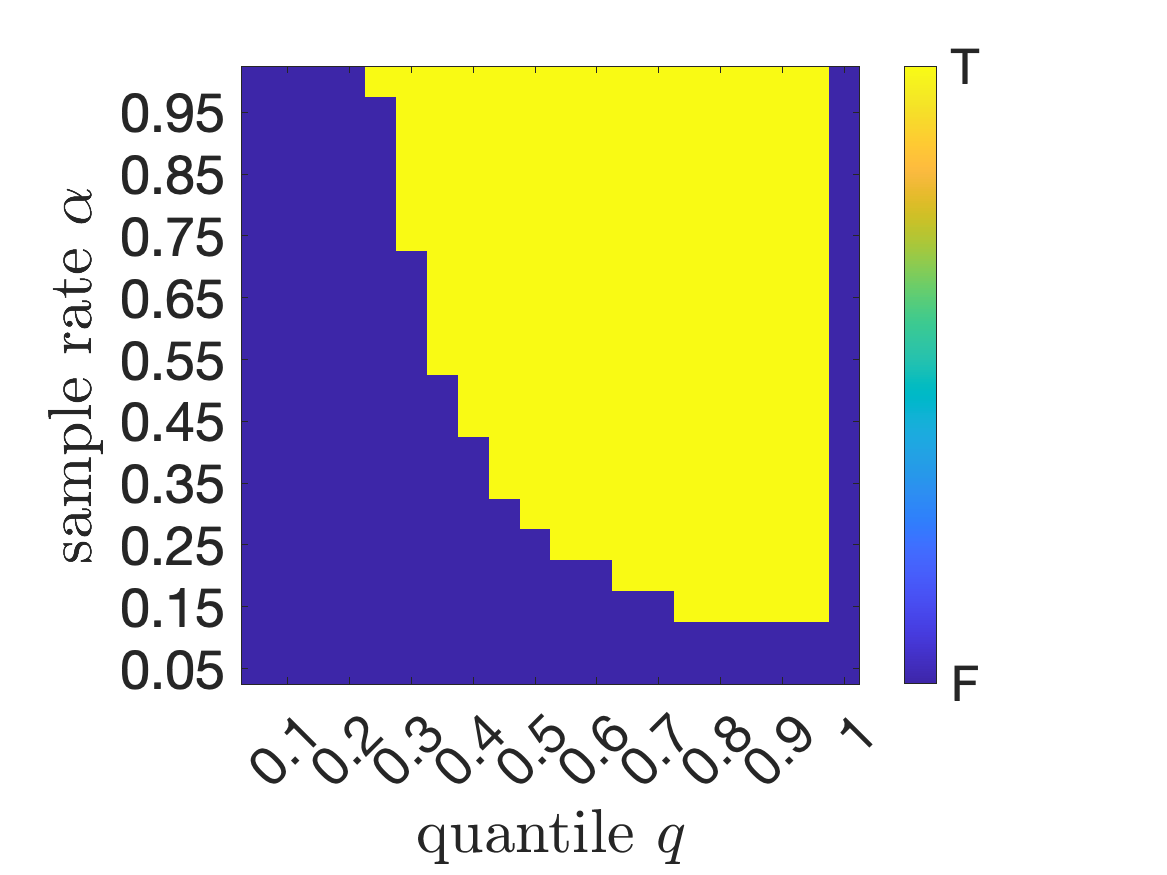}%
    \includegraphics[width=0.5\columnwidth]{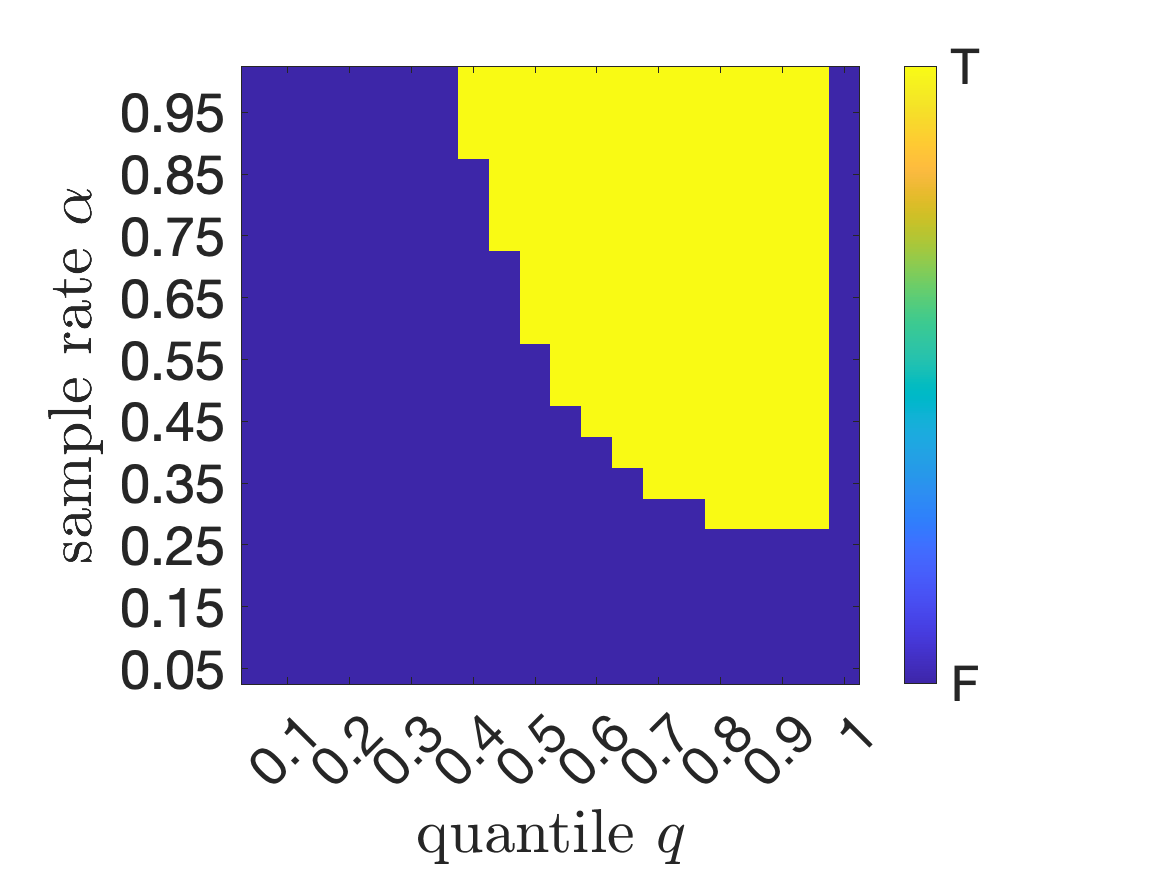}
    \includegraphics[width=0.5\columnwidth]{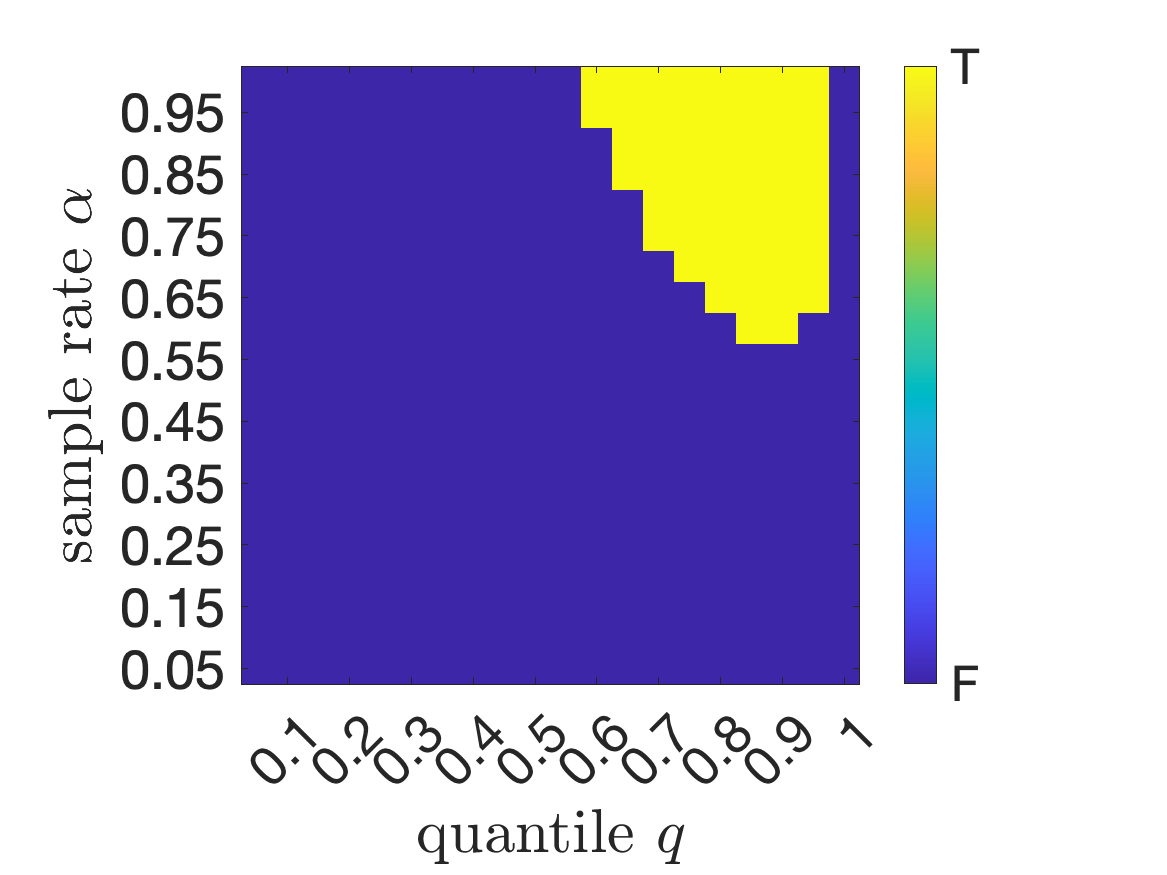}%
    \includegraphics[width=0.415\columnwidth]{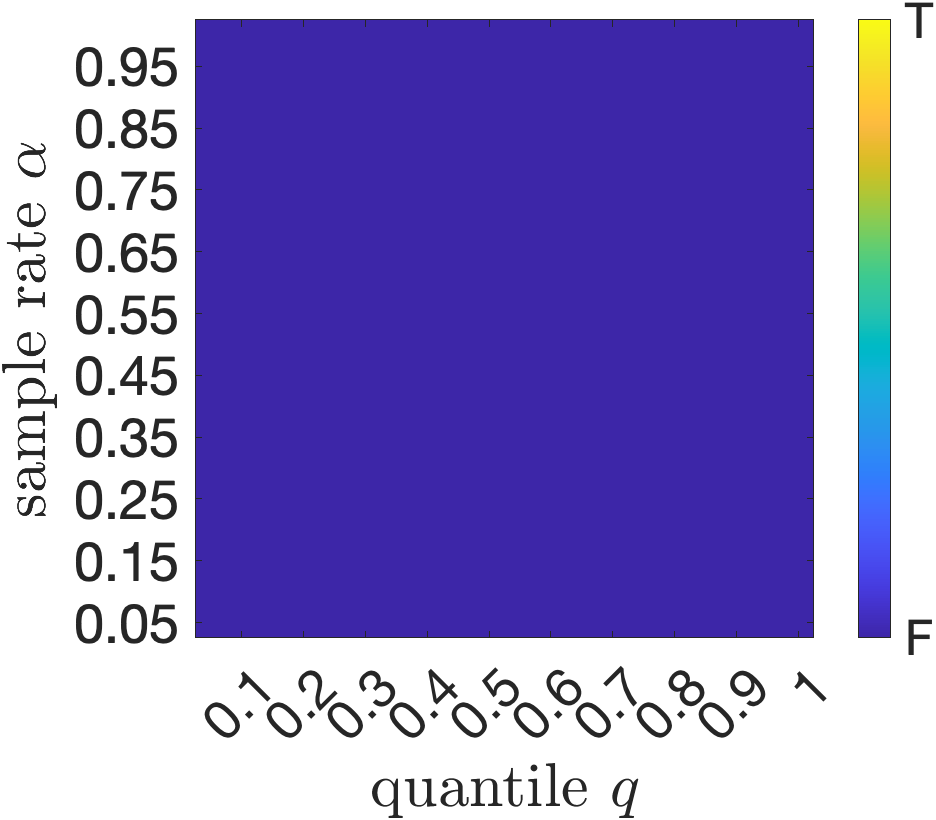}
    \caption{Heatmap indicating which hyperparameter sets $(q,\alpha)$ satisfy the assumptions of Theorem~\ref{thm:main}, $\alpha(1-q) > \beta$, $\alpha q > \beta$, and \eqref{uniform_sn}, for $\beta = 10^{-5}$ (upper left), $\beta = 10^{-4}$ (upper right), $\beta = 10^{-3}$ (lower left), and $\beta = 10^{-2}$ (lower right) for a system defined by row-normalized $A \in \mathbb{R}^{50000 \times 100}$ with i.i.d.\ $\mathcal{N}(0,1)$ entries.}\label{fig:hypothesis_heatmap}
\end{figure}

\subsection{Empirical convergence}

In Figures~\ref{fig:convergence_1e-05}-\ref{fig:convergence_1e-02}, we plot the empirical convergence of ten trials of sQRK with $q = 0.9$ and a variety of $\beta$ and $\alpha$ values.  In each figure, we plot the empirical convergence of ten independent trials of 1000 iterations of sQRK with respect to wall clock time on the left and with respect to iteration $k$ on the right.  In the error versus time plots on the left, we plot a cloud indicating the errors of the 10 independent trials (brighter color indicates more trial errors were below) and lines indicating the mean error of the ten trials.  In the error versus iteration plots on the right, we only plot the mean error over the ten trials as the errors are highly similar for different values of $\alpha$.  We additionally plot the bounds given by Theorem~\ref{thm:main} if they are decreasing.

For Figures~\ref{fig:convergence_1e-05}-\ref{fig:convergence_1e-02}, we generate a single system defined by $A \in \mathbb{R}^{50000 \times 100}$ and $\ve{b} = \ve{0} \in \mathbb{R}^{50000}$.  We generate $A$ with i.i.d.\ $\mathcal{N}(0,1)$ entries, then row-normalize it.  In each figure, we generate $\ve{c}$ to have $\lfloor \beta m \rfloor$ nonzero entries each with value ten.  Thus $\hat{\ve{b}} = \ve{c}$ has  $\lfloor \beta m \rfloor$ nonzero entries each with value ten.  We approximate $\sigma_{\alpha,q,\beta,\min}(A)$ by recording the minimum singular value encountered in each of the $B_k\setminus C$ submatrices encountered during the ten trials of 1000 iterations of sQRK.

In Figure~\ref{fig:convergence_1e-05}, we plot the empirical convergence of ten trials of 1000 iterations of sQRK with $q = 0.9$, $\beta = 10^{-5}$, and $\alpha = 1, 0.5,$ and $0.15$ values.  We plot the empirical convergence of ten independent trials of 1000 iterations of sQRK with respect to wall clock time on the left (brighter color of cloud indicates more trial errors were below) and the mean error for each $\alpha$, and we plot the mean error for each $\alpha$ with respect to iteration $k$ on the right.  We additionally plot the bounds given by Theorem~\ref{thm:main} for each $\alpha$ (all were decreasing).

\begin{figure}
    \includegraphics[width=0.48\columnwidth]{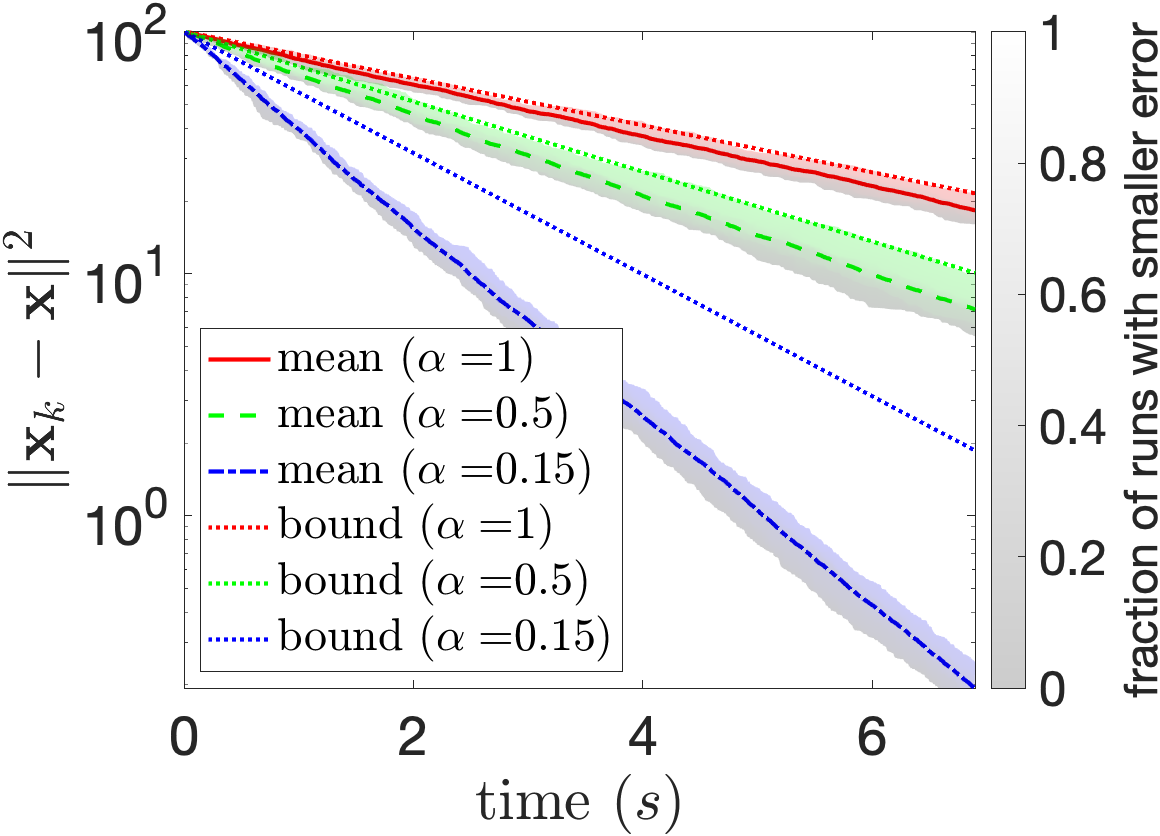}\hfill%
    \includegraphics[width=0.48\columnwidth]{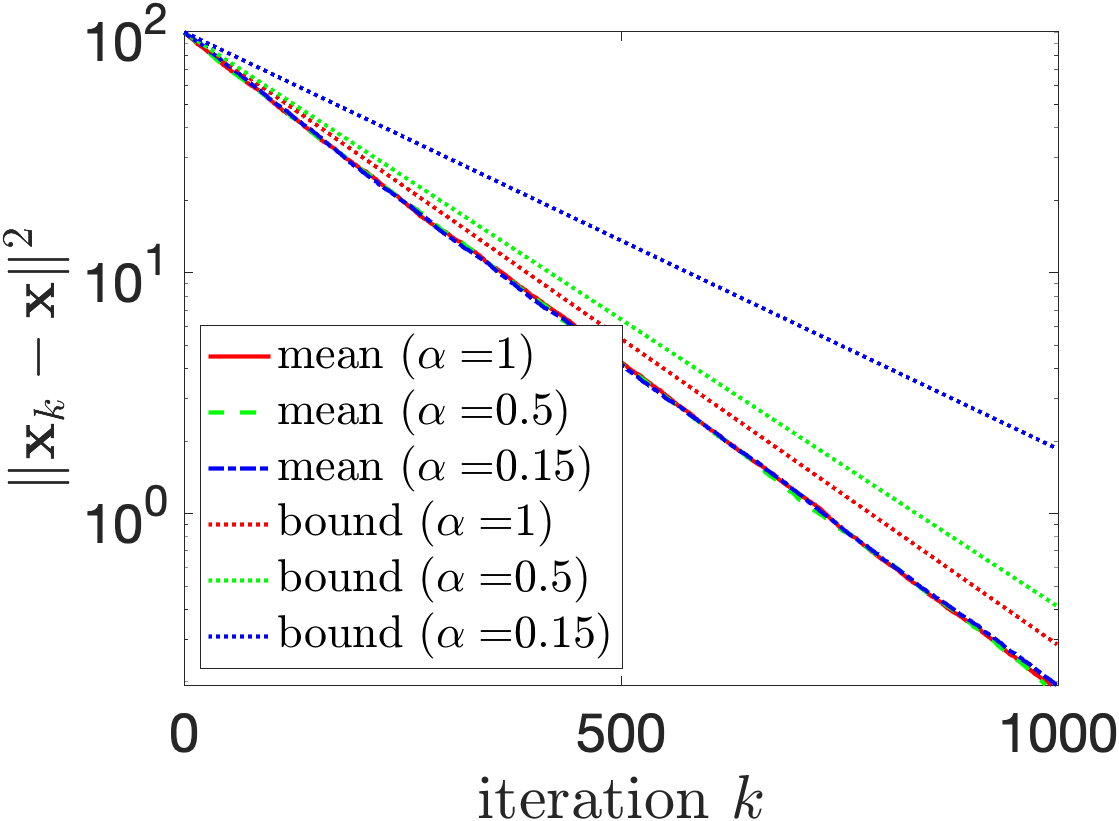}
    \caption{Empirical convergence with respect to time (left) and iterations (right) for ten trials of sQRK with $\alpha = 1, 0.5,$ or $0.15$ on system defined by row-normalized $A \in \mathbb{R}^{50000 \times 100}$ and $\hat{\ve{b}} \in \mathbb{R}^{50000}$ with $\lfloor \beta m \rfloor$ corrupted entries where $\beta = 10^{-5}$.  We additionally plot the bounds provided by Theorem~\ref{thm:main} in dotted lines if they are decreasing.}\label{fig:convergence_1e-05}
\end{figure}

In Figure~\ref{fig:convergence_1e-04}, we plot the empirical convergence of ten trials of 1000 iterations of sQRK with $q = 0.9$, $\beta = 10^{-4}$, and $\alpha = 1, 0.5,$ and $0.15$ values.  We plot the empirical convergence of ten independent trials of 1000 iterations of sQRK with respect to wall clock time on the left (brighter color of cloud indicates more trial errors were below) and the mean error for each $\alpha$, and we plot the mean error for each $\alpha$ with respect to iteration $k$ on the right.  We additionally plot the bounds given by Theorem~\ref{thm:main} for each $\alpha$ (all were decreasing).

\begin{figure}
    \includegraphics[width=0.48\columnwidth]{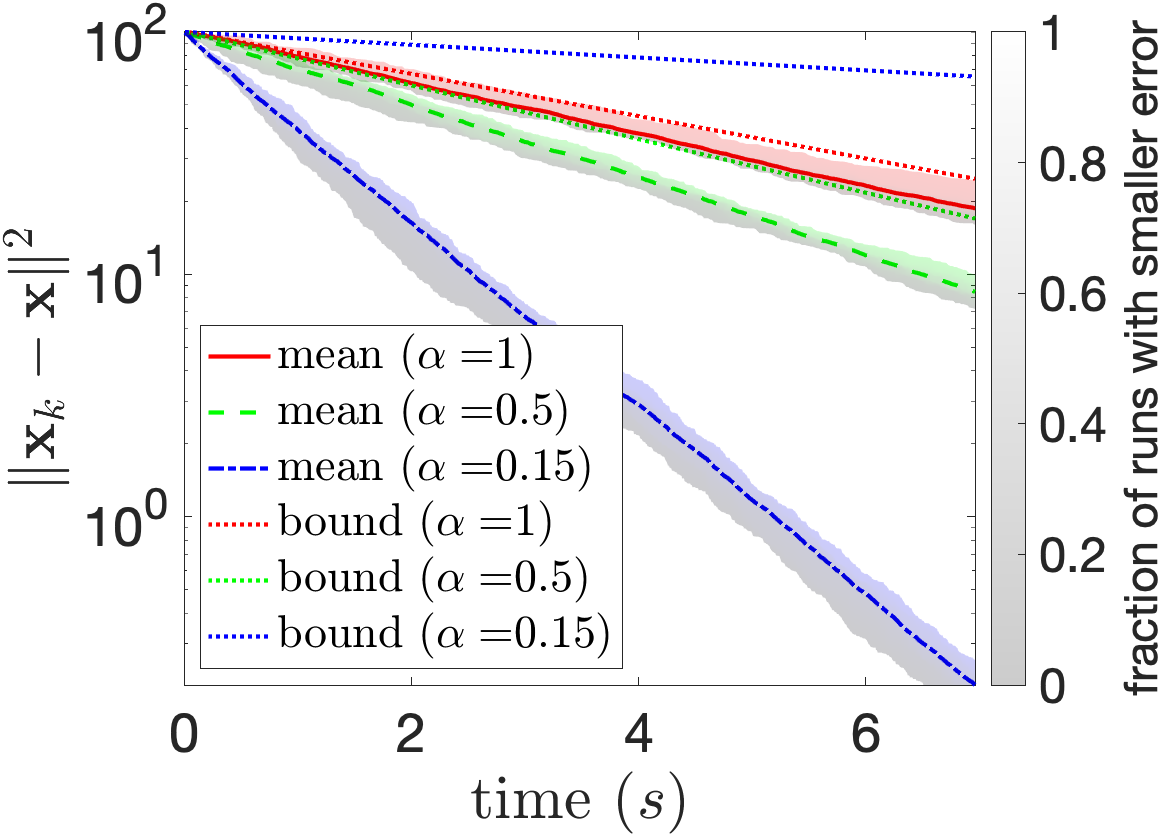}\hfill%
    \includegraphics[width=0.48\columnwidth]{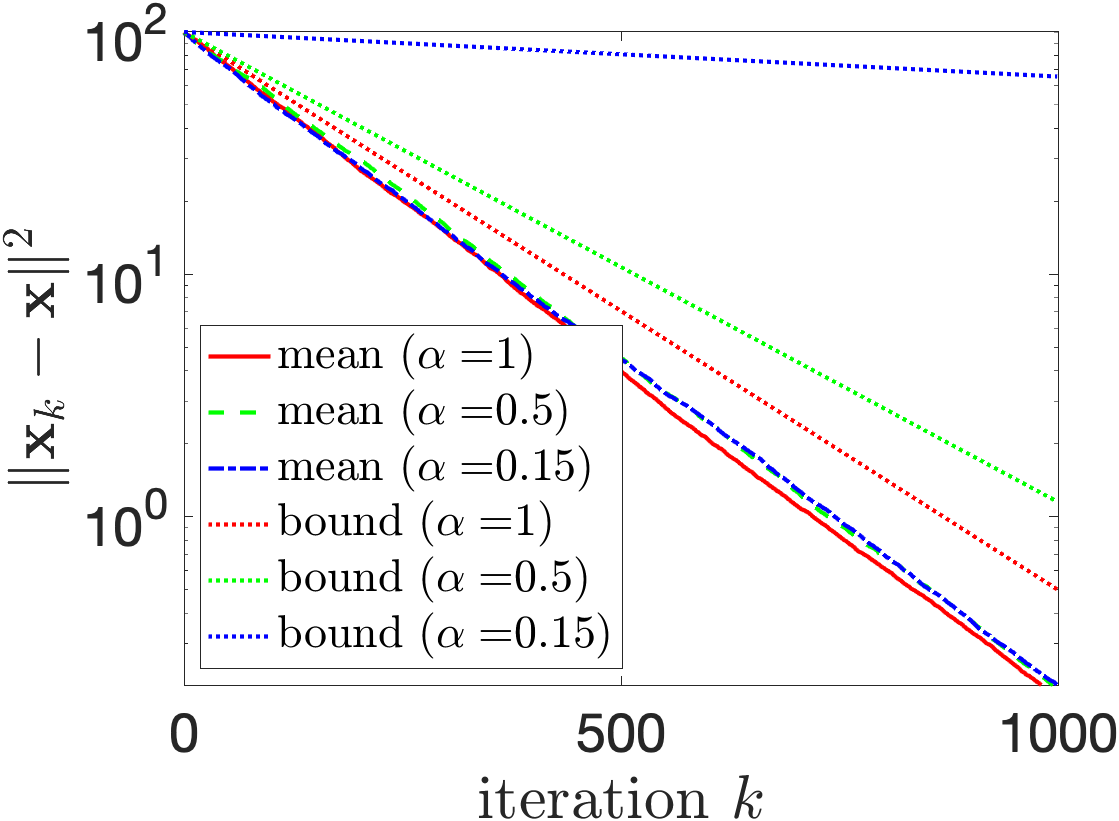}
    \caption{Empirical convergence with respect to time (left) and iterations (right) for ten trials of sQRK with $\alpha = 1, 0.5,$ or $0.15$ on system defined by row-normalized $A \in \mathbb{R}^{50000 \times 100}$ and $\hat{\ve{b}} \in \mathbb{R}^{50000}$ with $\lfloor \beta m \rfloor$ corrupted entries where $\beta = 10^{-4}$.  We additionally plot the bounds provided by Theorem~\ref{thm:main} in dotted lines if they are decreasing.}\label{fig:convergence_1e-04}
\end{figure}

In Figure~\ref{fig:convergence_1e-03}, we plot the empirical convergence of ten trials of 1000 iterations of sQRK with $q = 0.9$, $\beta = 10^{-3}$, and $\alpha = 1, 0.5,$ and $0.15$ values.  We plot the empirical convergence of ten independent trials of 1000 iterations of sQRK with respect to wall clock time on the left (brighter color of cloud indicates more trial errors were below) and the mean error for each $\alpha$, and we plot the mean error for each $\alpha$ with respect to iteration $k$ on the right.  We additionally plot the bounds given by Theorem~\ref{thm:main} for each $\alpha = 1$ and $\alpha = 0.5$.

\begin{figure}
    \includegraphics[width=0.48\columnwidth]{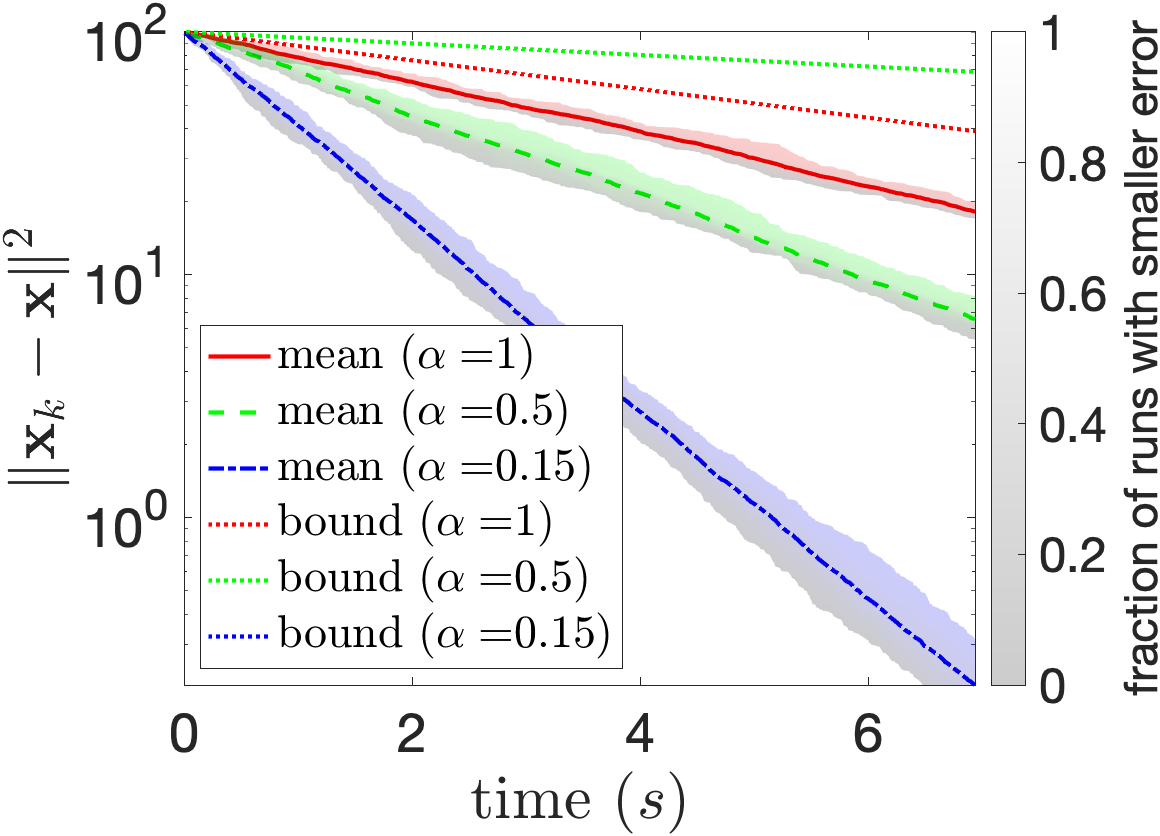}\hfill%
    \includegraphics[width=0.48\columnwidth]{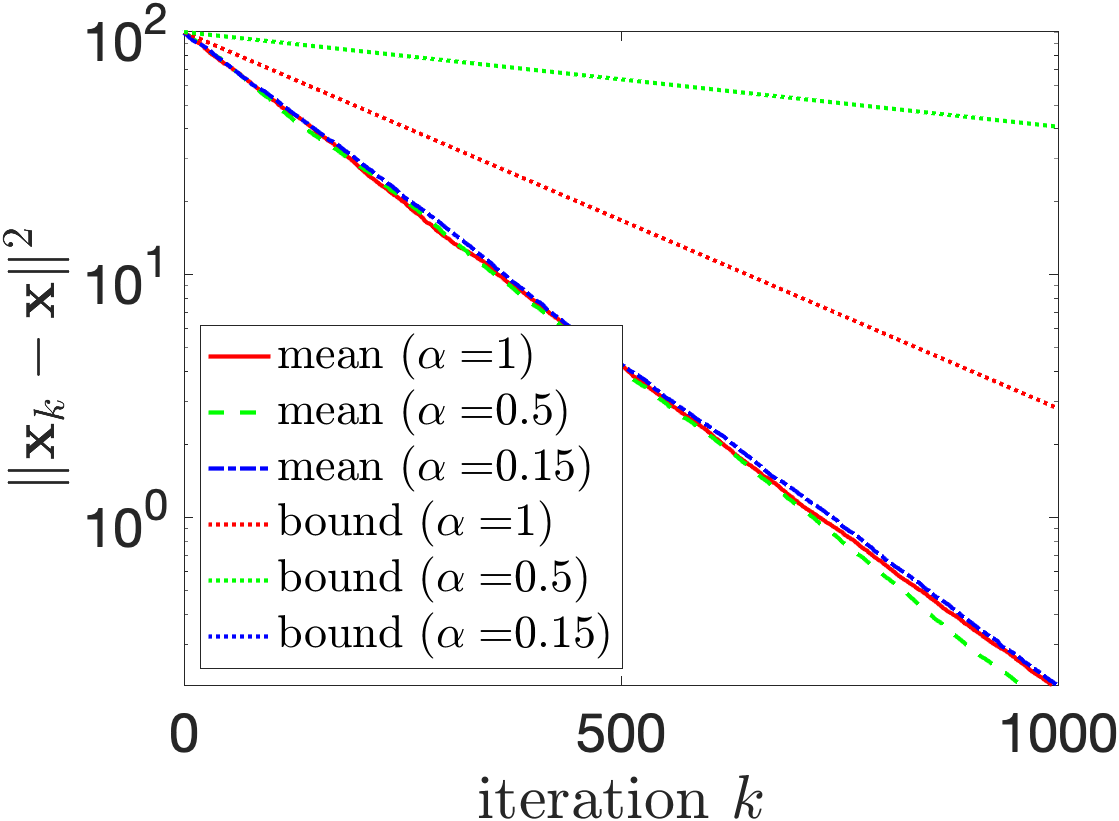}
    \caption{Empirical convergence with respect to time (left) and iterations (right) for ten trials of sQRK with $\alpha = 1, 0.5,$ or $0.15$ on system defined by row-normalized $A \in \mathbb{R}^{50000 \times 100}$ and $\hat{\ve{b}} \in \mathbb{R}^{50000}$ with $\lfloor \beta m \rfloor$ corrupted entries where $\beta = 10^{-3}$.  We additionally plot the bounds provided by Theorem~\ref{thm:main} in dotted lines if they are decreasing.}\label{fig:convergence_1e-03}
\end{figure}

In Figure~\ref{fig:convergence_1e-03}, we plot the empirical convergence of ten trials of 1000 iterations of sQRK with $q = 0.9$, $\beta = 10^{-2}$, and $\alpha = 1, 0.5,$ and $0.15$ values.  We plot the empirical convergence of ten independent trials of 1000 iterations of sQRK with respect to wall clock time on the left (brighter color of cloud indicates more trial errors were below) and the mean error for each $\alpha$, and we plot the mean error for each $\alpha$ with respect to iteration $k$ on the right.  Note that none of the bounds given by Theorem~\ref{thm:main} for any $\alpha$ were decreasing so they do not appear.

\begin{figure}
    \includegraphics[width=0.48\columnwidth]{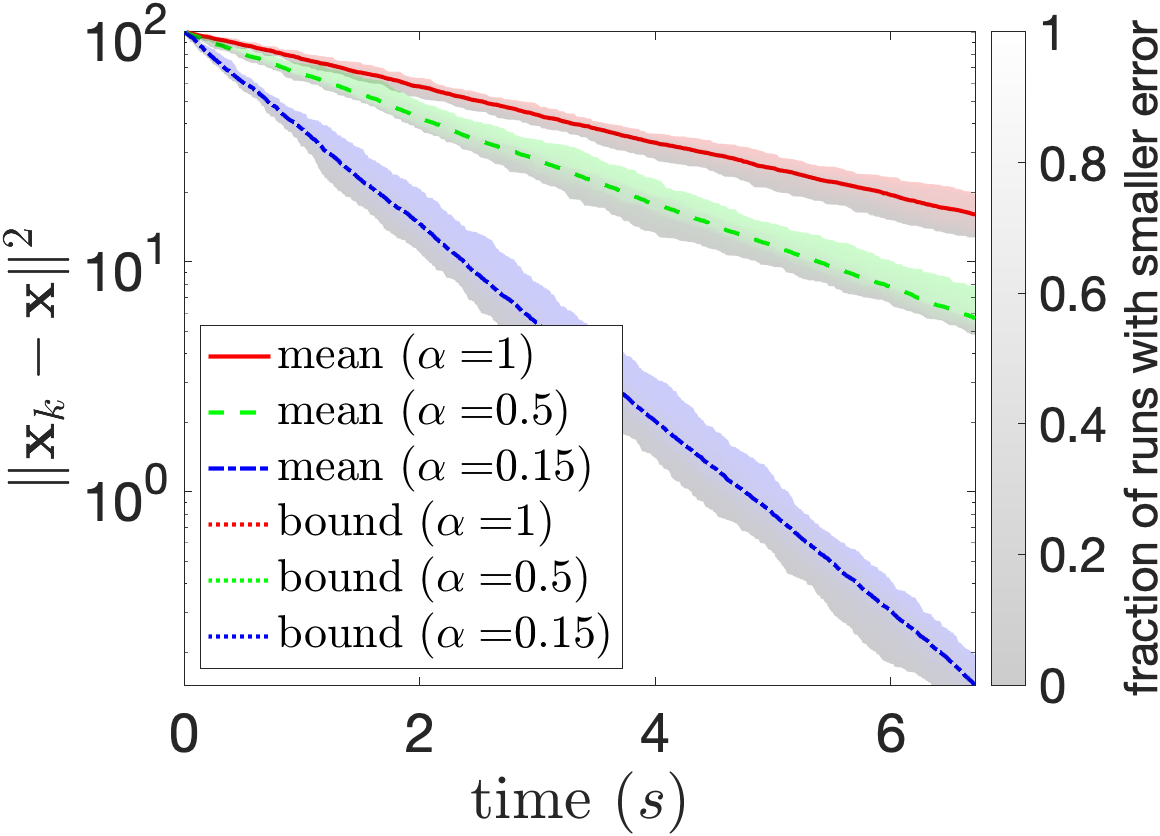}\hfill%
    \includegraphics[width=0.48\columnwidth]{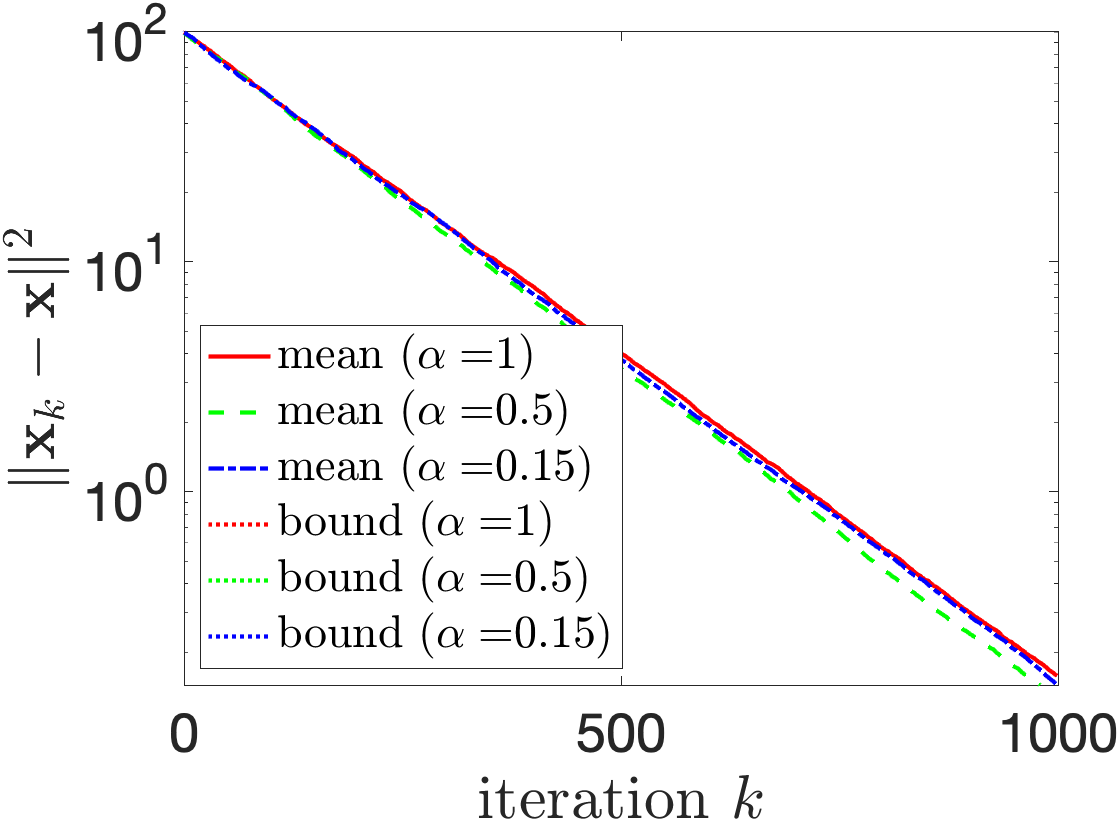}
    \caption{Empirical convergence with respect to time (left) and iterations (right) for ten trials of sQRK with $\alpha = 1, 0.5,$ or $0.15$ on system defined by row-normalized $A \in \mathbb{R}^{50000 \times 100}$ and $\hat{\ve{b}} \in \mathbb{R}^{50000}$ with $\lfloor \beta m \rfloor$ corrupted entries where $\beta = 10^{-2}$.  We additionally plot the bounds provided by Theorem~\ref{thm:main} in dotted lines if they are decreasing.}\label{fig:convergence_1e-02}
\end{figure}

\subsection{Varying $q$}

In Figures~\ref{fig:convergence_qs_1e-03} and~\ref{fig:convergence_qs_1e-04}, we plot the empirical convergence of ten trials of sQRK with and a variety of $\beta$, $\alpha$, and $q$ values.  In each figure, we plot the empirical convergence of ten independent trials of 1000 iterations of sQRK with respect to wall clock time on the left and with respect to iteration $k$ on the right.  In the error versus time plots on the left, we plot a cloud indicating the errors of the 10 independent trials (brighter color indicates more trial errors were below) and lines indicating the mean error of the ten trials.  In the error versus iteration plots on the right, we only plot the mean error over the ten trials as the errors are highly similar for different values of $\alpha$.  We plot the bounds given by Theorem~\ref{thm:main} if they decrease.

For Figures~\ref{fig:convergence_qs_1e-03} and~\ref{fig:convergence_qs_1e-04}, we generate a single system defined by $A \in \mathbb{R}^{50000 \times 100}$ and $\ve{b} = \ve{0} \in \mathbb{R}^{50000}$.  We generate $A$ with i.i.d.\ $\mathcal{N}(0,1)$ entries, then row-normalize it.  In each figure, we generate $\ve{c}$ to have $\lfloor \beta m \rfloor$ nonzero entries each with value ten.  Thus $\hat{\ve{b}} = \ve{c}$ has  $\lfloor \beta m \rfloor$ nonzero entries each with value ten.  We approximate $\sigma_{\alpha,q,\beta,\min}(A)$ by recording the minimum singular value encountered in each of the $B_k\setminus C$ submatrices encountered during the ten trials of 1000 iterations of sQRK.

In Figure~\ref{fig:convergence_qs_1e-03}, we plot the empirical convergence of ten trials of 1000 iterations of sQRK with $\beta = 10^{-3}$, $\alpha = 1$ (top), $\alpha = 0.5$ (middle), and $\alpha = 0.15$ (bottom), and $q = 0.5, 0.7,$ and $0.9$.  We plot the empirical convergence of ten independent trials of 1000 iterations of sQRK with respect to wall clock time on the left (brighter color of cloud indicates more trial errors were below) and the mean error for each $\alpha$. We plot the mean error for each $\alpha$ with respect to iteration $k$ on the right.  We plot the bounds given by Theorem~\ref{thm:main} that were decreasing.

\begin{figure}
    \includegraphics[width=0.48\columnwidth]{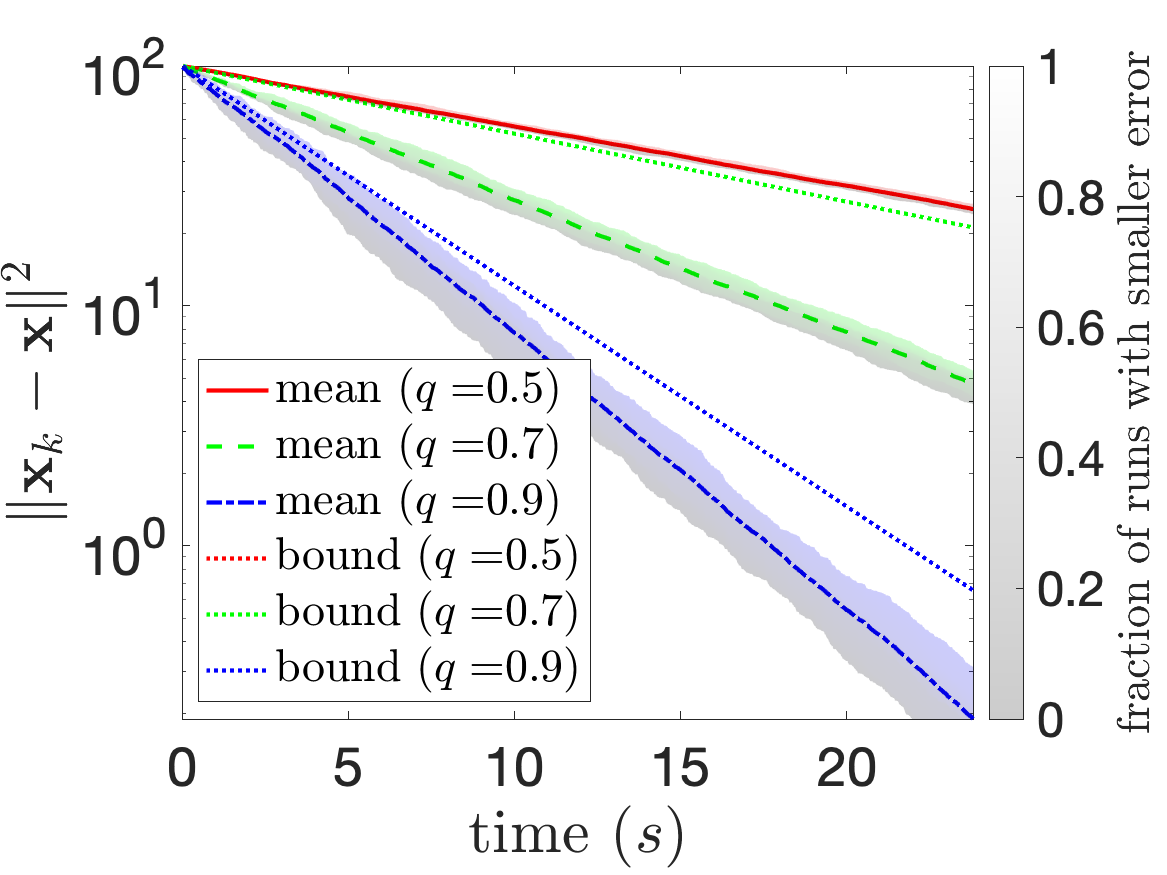}\hfill%
    \includegraphics[width=0.48\columnwidth]{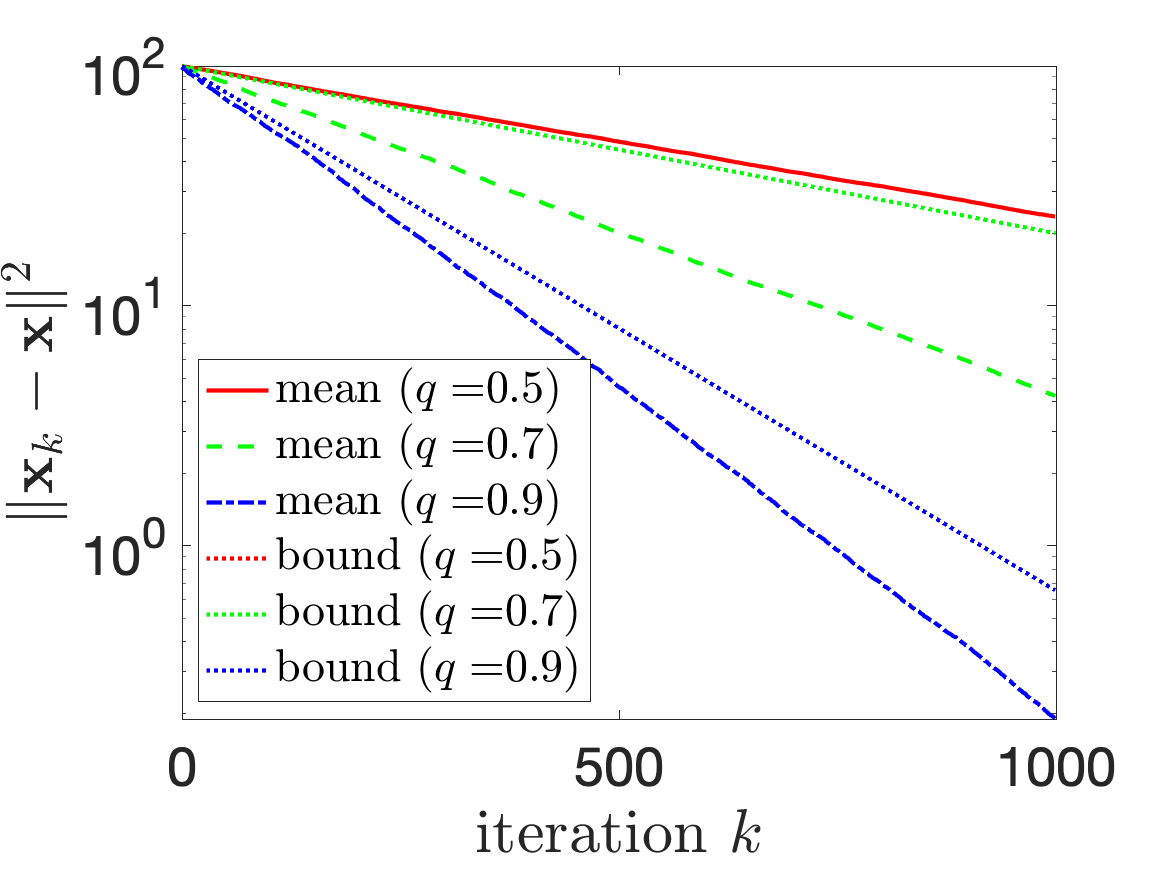}

    \includegraphics[width=0.48\columnwidth]{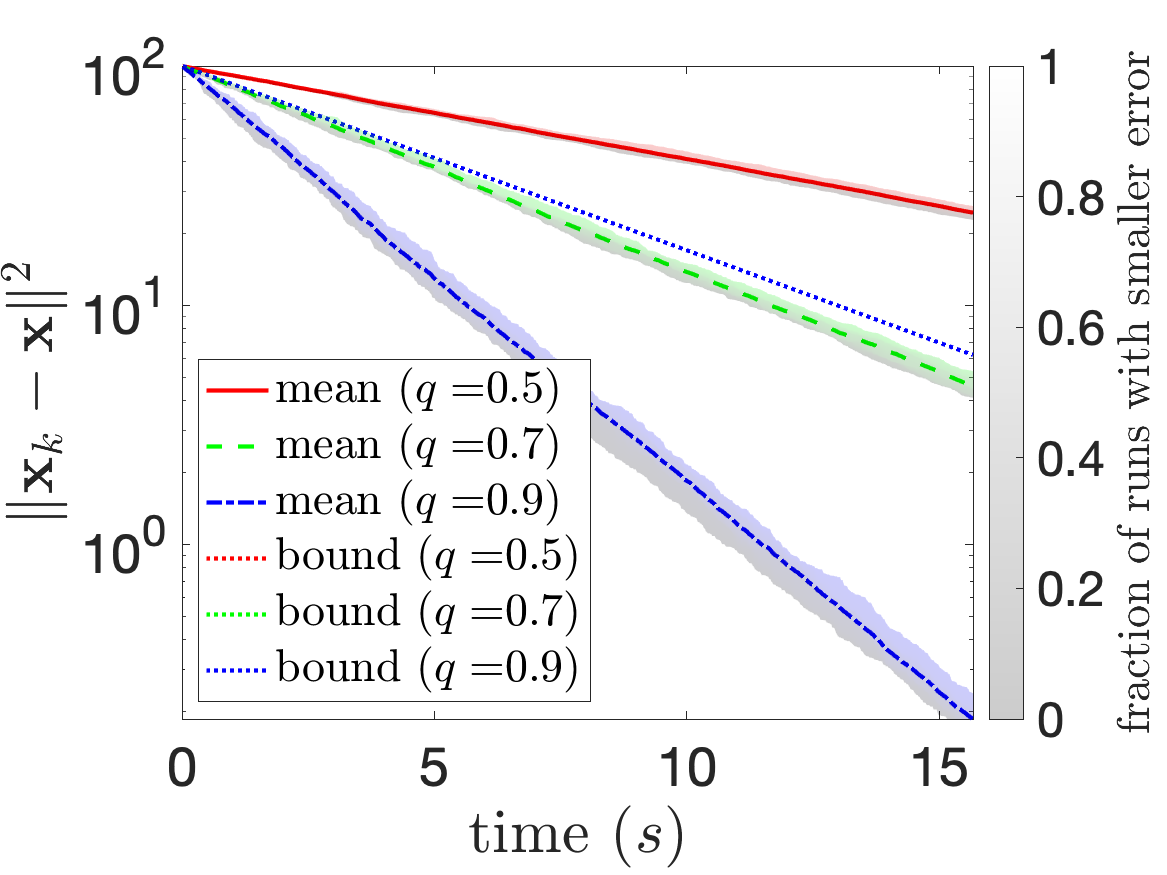}\hfill%
    \includegraphics[width=0.48\columnwidth]{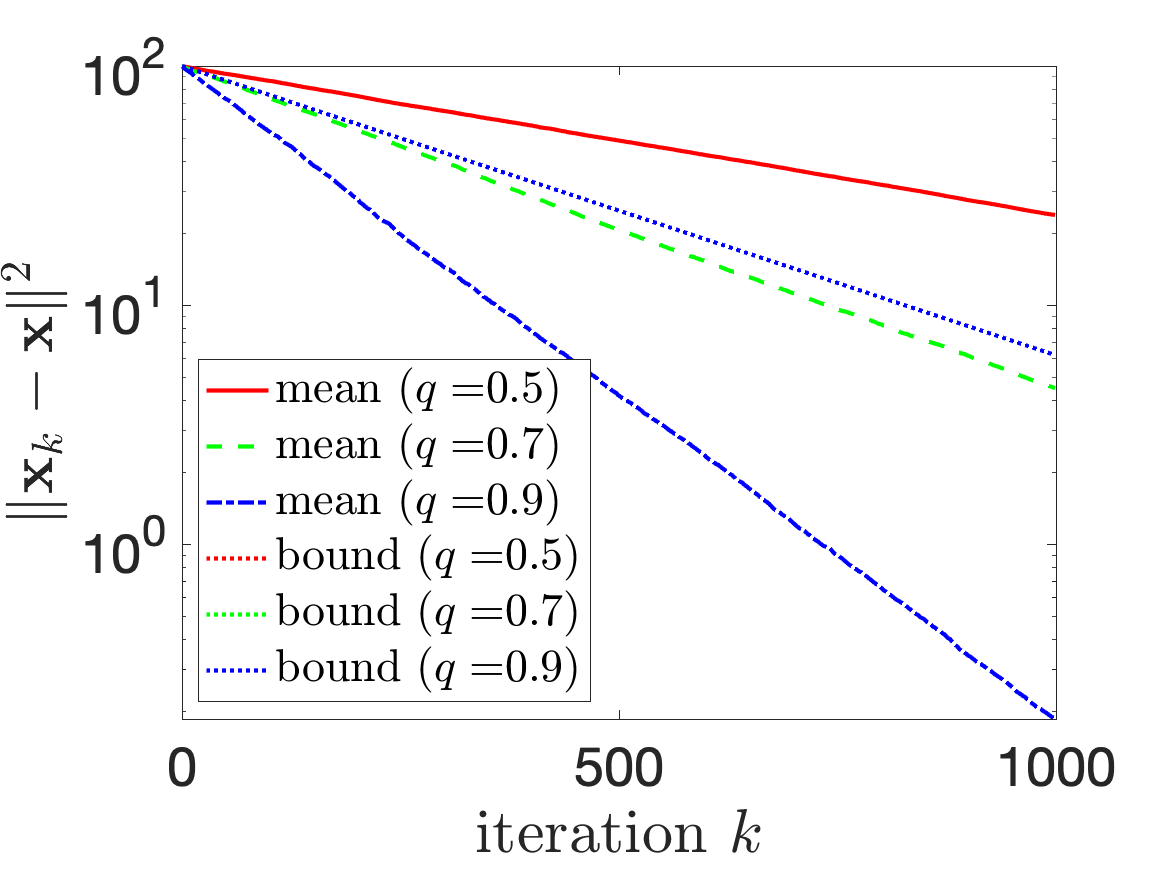}

    \includegraphics[width=0.48\columnwidth]{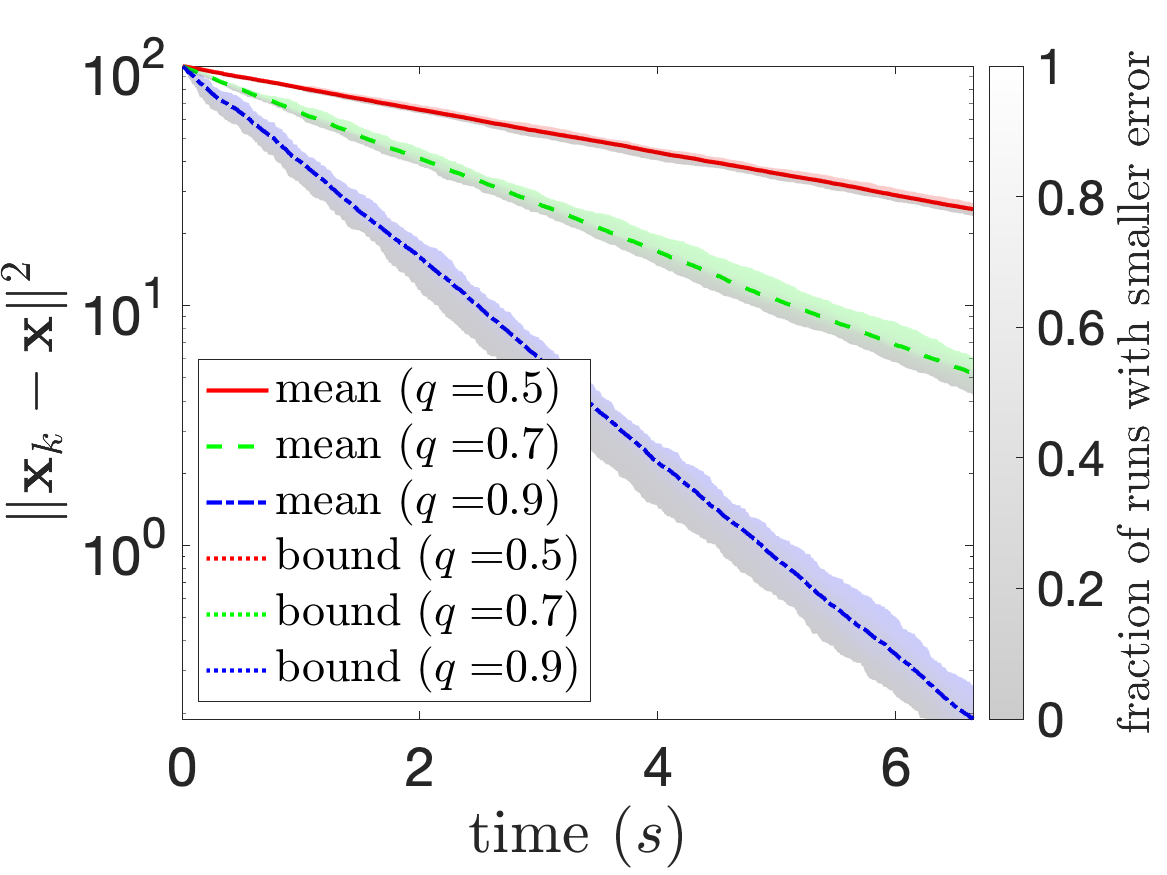}\hfill%
    \includegraphics[width=0.48\columnwidth]{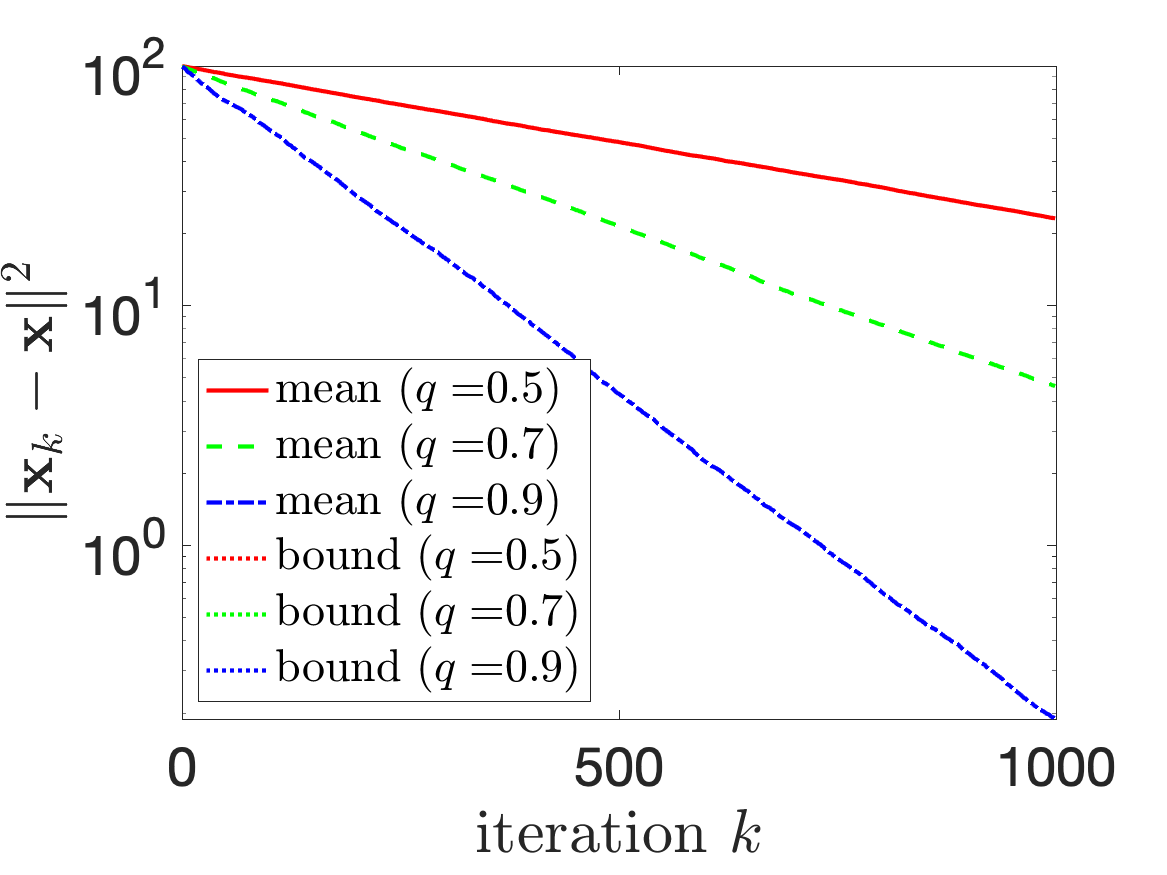}
    \caption{Empirical convergence with respect to time (left) and iterations (right) for ten trials of sQRK with $\alpha = 1$ (top), $\alpha = 0.5$ (middle), and $\alpha = 0.15$ (bottom) and $q = 0.5, 0.7,$ and $0.9$ on system defined by row-normalized $A \in \mathbb{R}^{50000 \times 100}$ and $\hat{\ve{b}} \in \mathbb{R}^{50000}$ with $\lfloor \beta m \rfloor$ corrupted entries where $\beta = 10^{-3}$.  We additionally plot the bounds provided by Theorem~\ref{thm:main} in dotted lines if they decrease.}\label{fig:convergence_qs_1e-03}
\end{figure}

In Figure~\ref{fig:convergence_qs_1e-03}, we plot the empirical convergence of ten trials of 1000 iterations of sQRK with $\beta = 10^{-4}$, $\alpha = 1$ (top), $\alpha = 0.5$ (middle), and $\alpha = 0.15$ (bottom), and $q = 0.5, 0.7,$ and $0.9$.  We plot the empirical convergence of ten independent trials of 1000 iterations of sQRK with respect to wall clock time on the left (brighter color of cloud indicates more trial errors were below) and the mean error for each $\alpha$. We plot the mean error for each $\alpha$ with respect to iteration $k$ on the right.  We plot the bounds given by Theorem~\ref{thm:main} that were decreasing.

\begin{figure}
    \includegraphics[width=0.48\columnwidth]{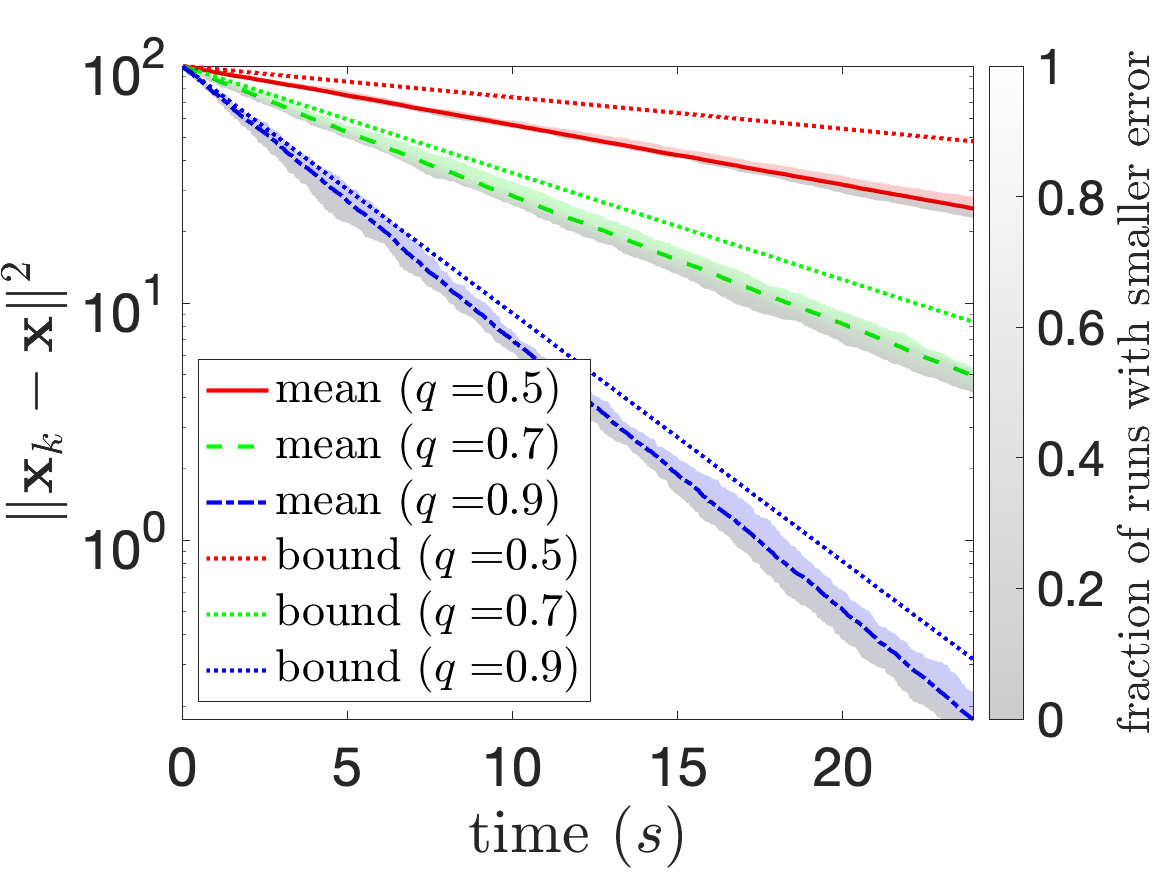}\hfill%
    \includegraphics[width=0.48\columnwidth]{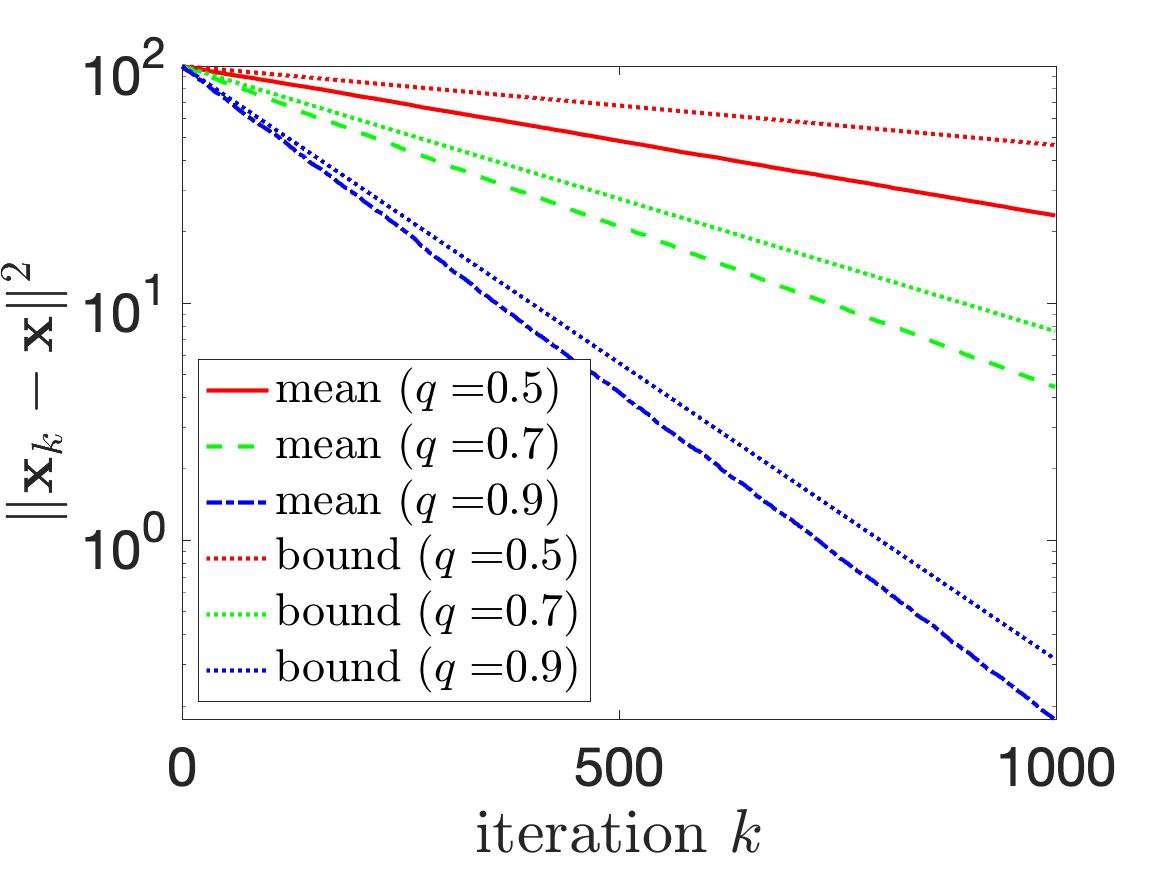}

    \includegraphics[width=0.48\columnwidth]{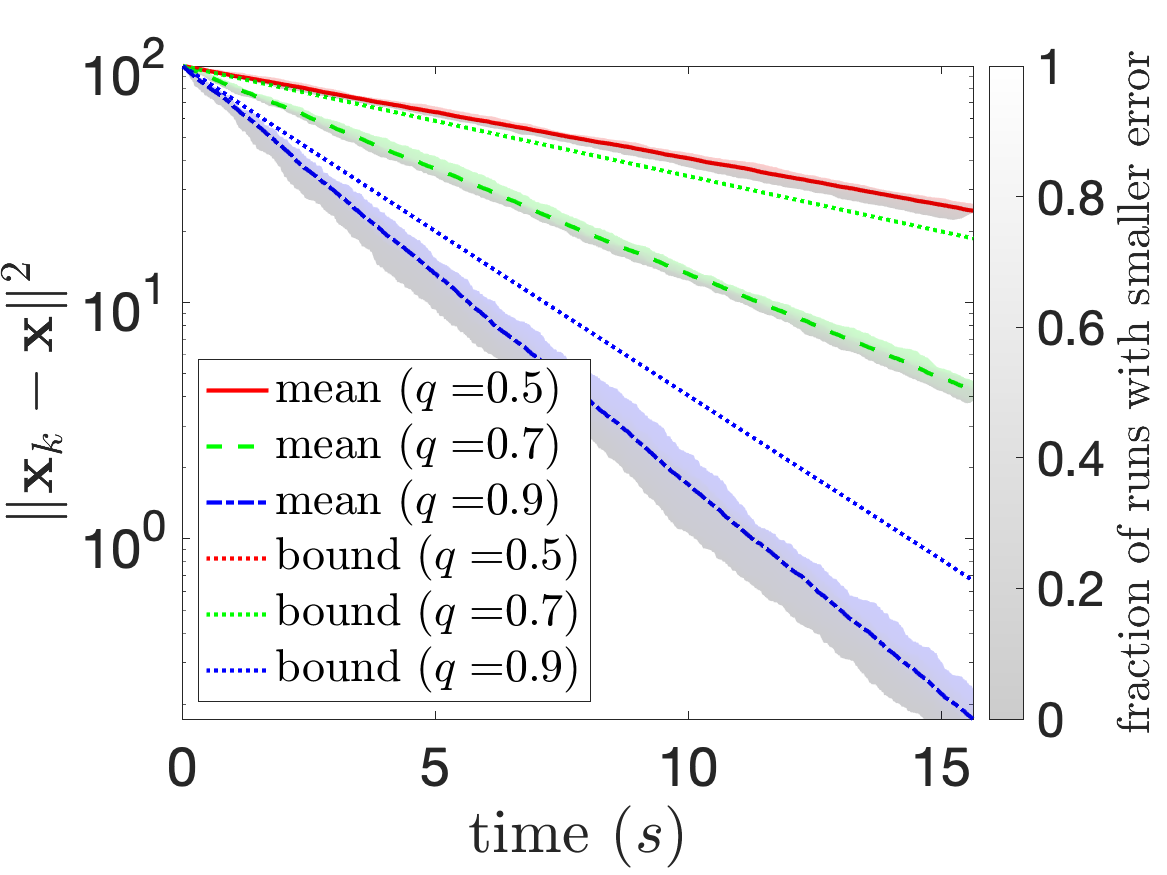}\hfill%
    \includegraphics[width=0.48\columnwidth]{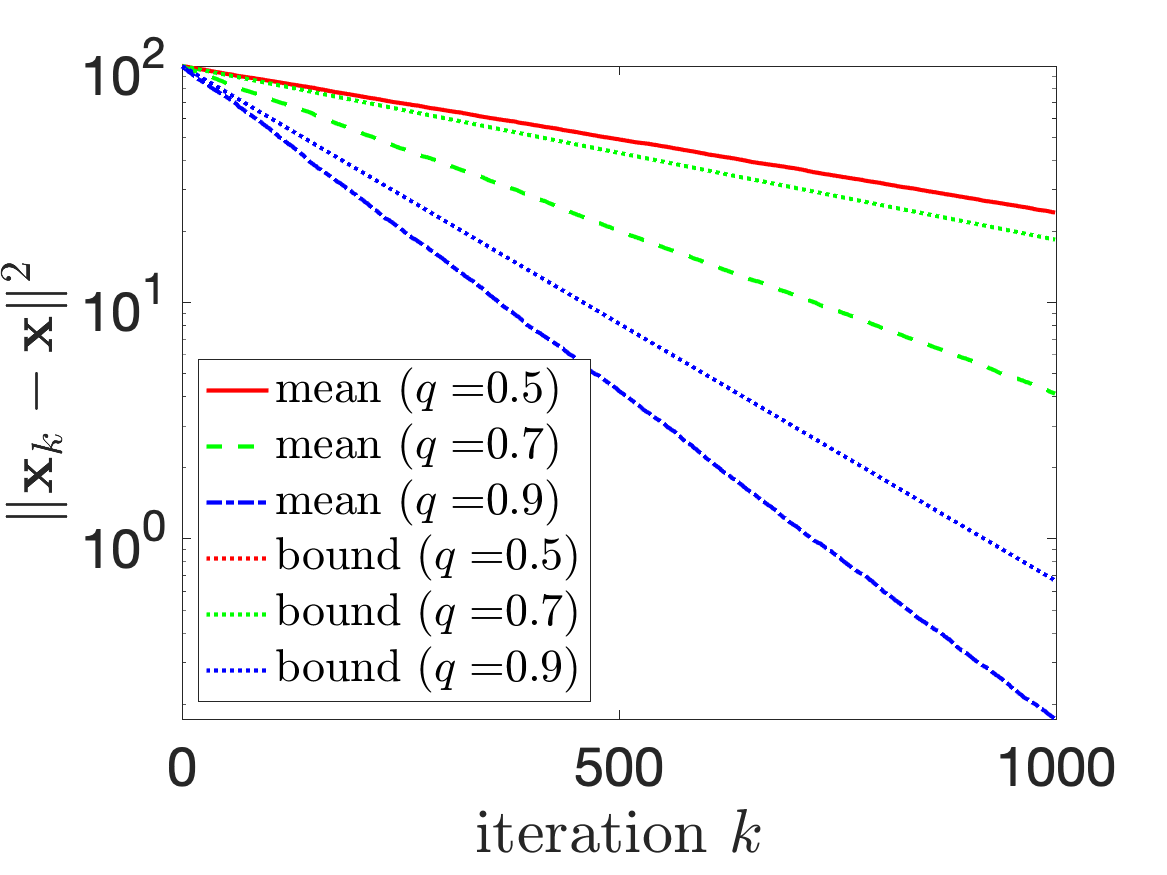}

    \includegraphics[width=0.48\columnwidth]{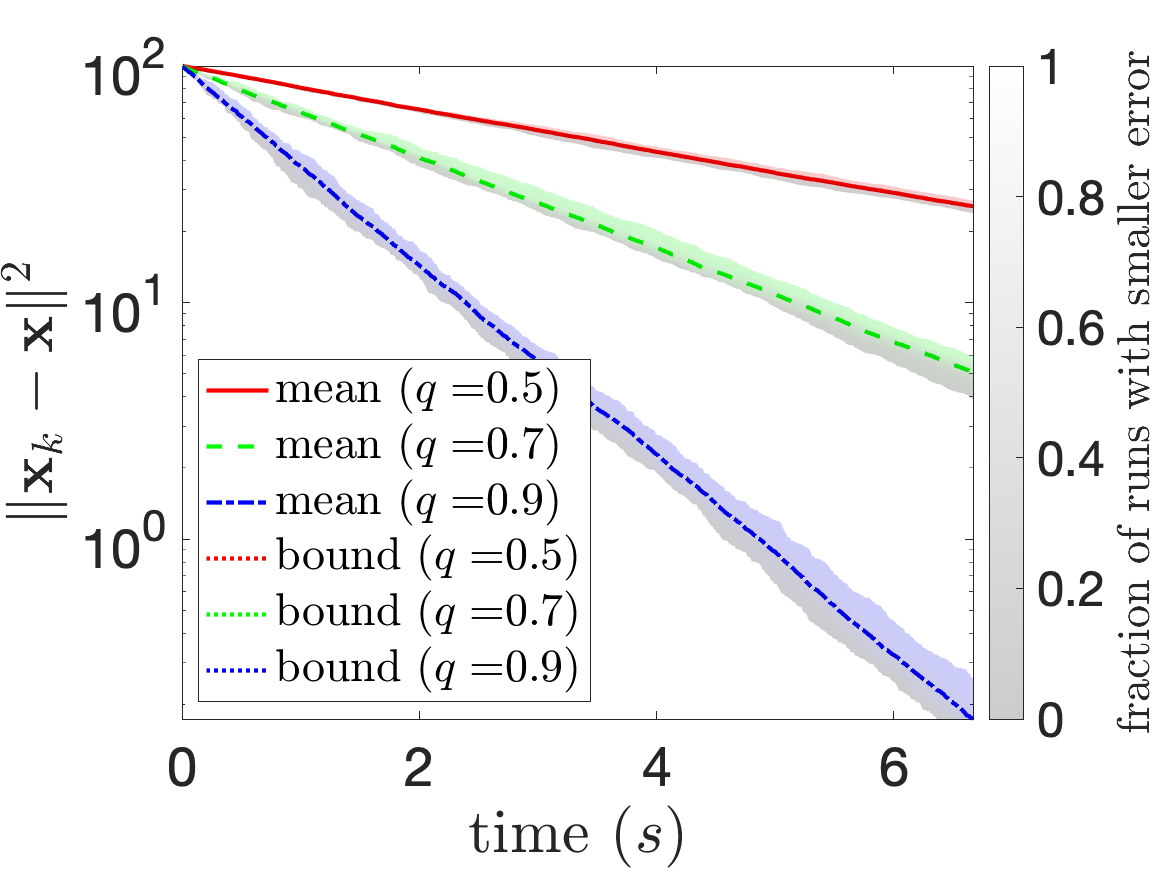}\hfill%
    \includegraphics[width=0.48\columnwidth]{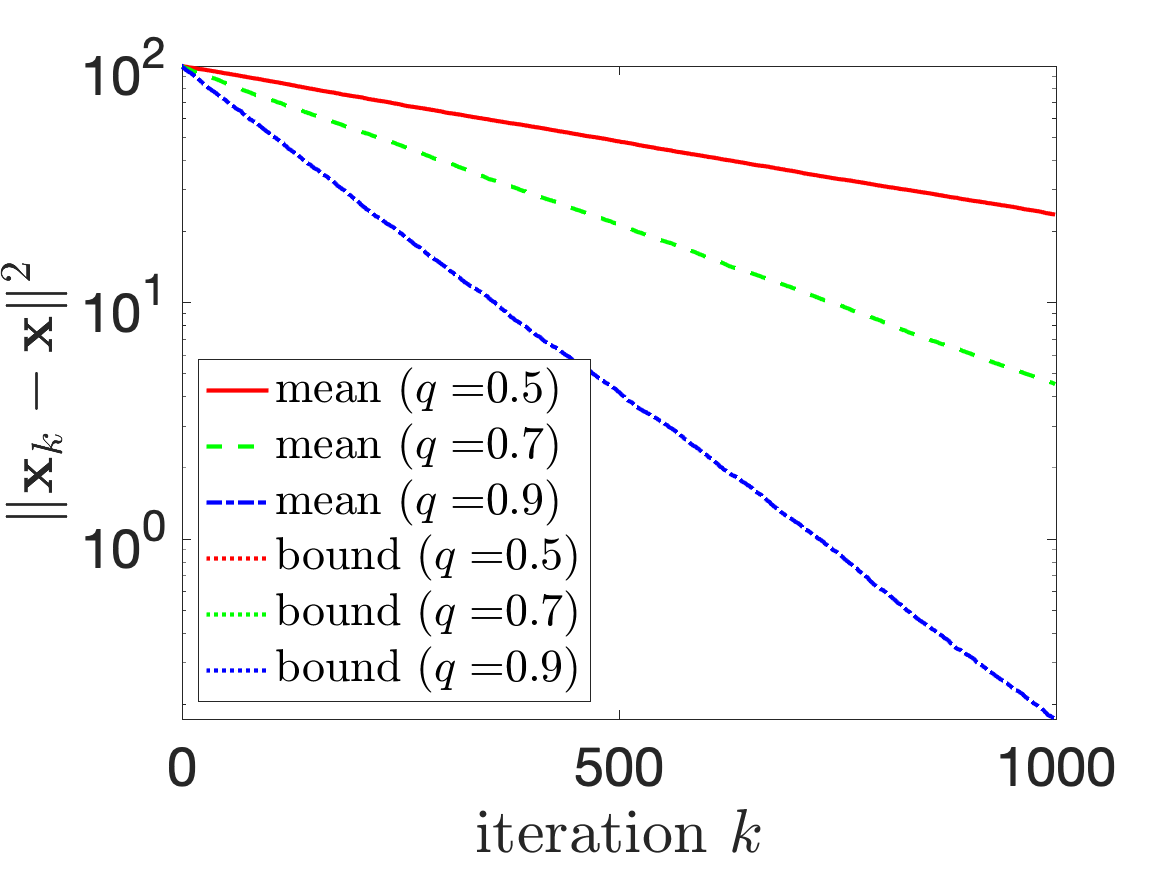}
    \caption{Empirical convergence with respect to time (left) and iterations (right) for ten trials of sQRK with $\alpha = 1$ (top), $\alpha = 0.5$ (middle), and $\alpha = 0.15$ (bottom) and $q = 0.5, 0.7,$ and $0.9$ on system defined by row-normalized $A \in \mathbb{R}^{50000 \times 100}$ and $\hat{\ve{b}} \in \mathbb{R}^{50000}$ with $\lfloor \beta m \rfloor$ corrupted entries where $\beta = 10^{-4}$.  We additionally plot the bounds provided by Theorem~\ref{thm:main} in dotted lines if they decrease.}\label{fig:convergence_qs_1e-04}
\end{figure}

\begin{figure*}[th!]
\centering
    \includegraphics[width=.3\textwidth]{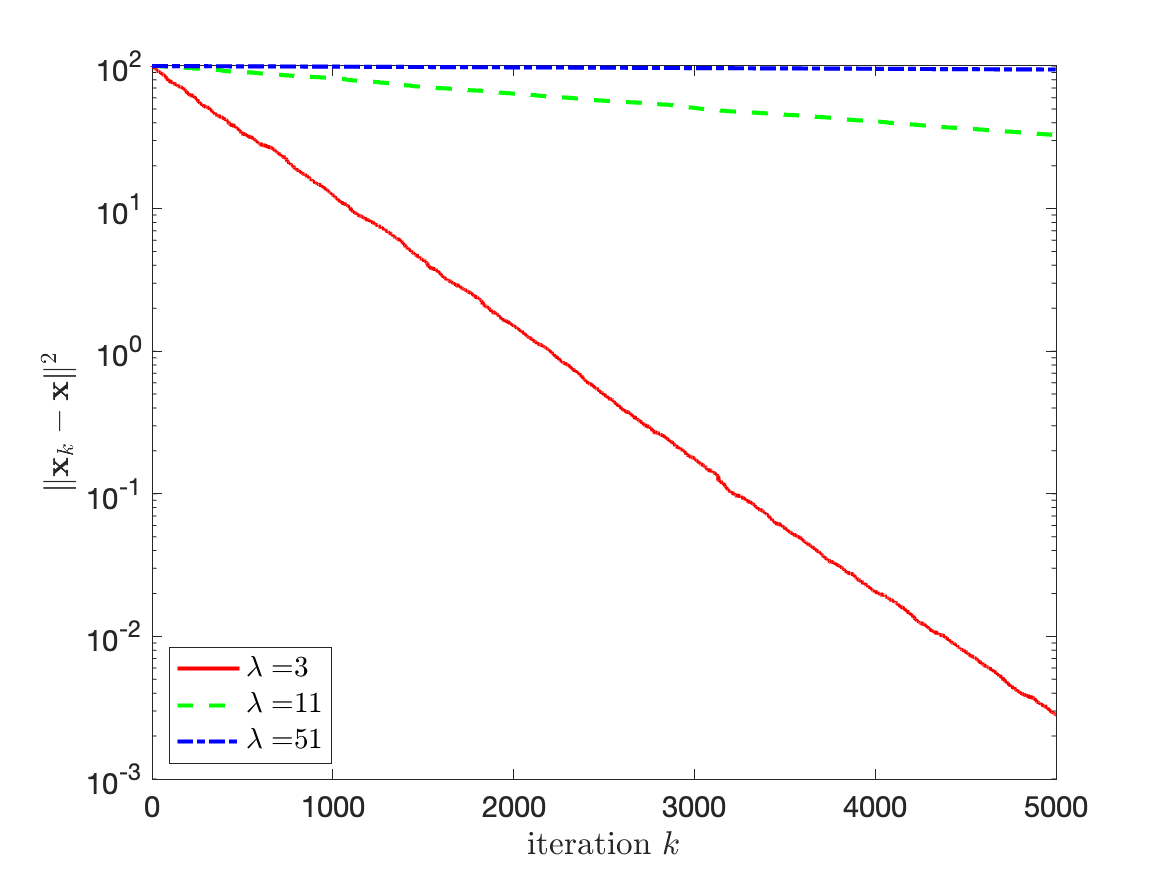}
    \includegraphics[width=.3\textwidth]{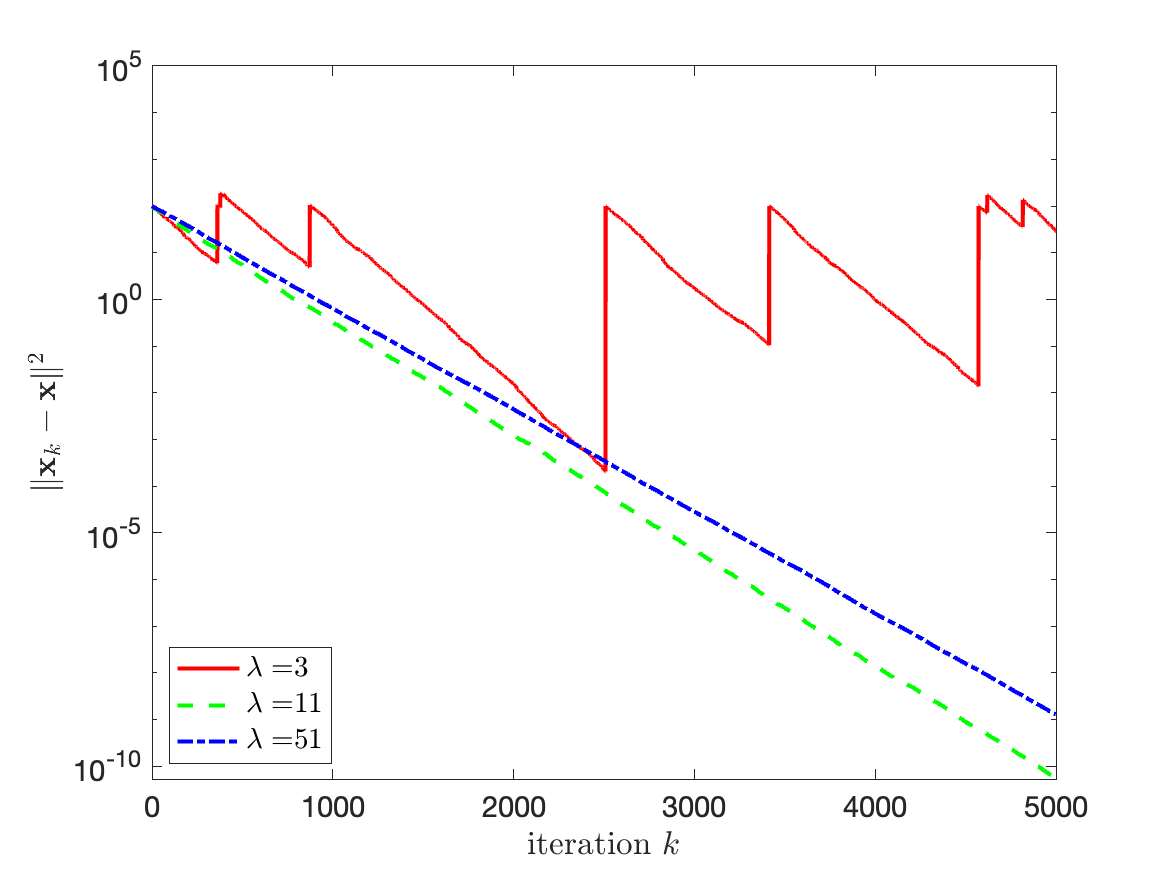}
    \includegraphics[width=.3\textwidth]{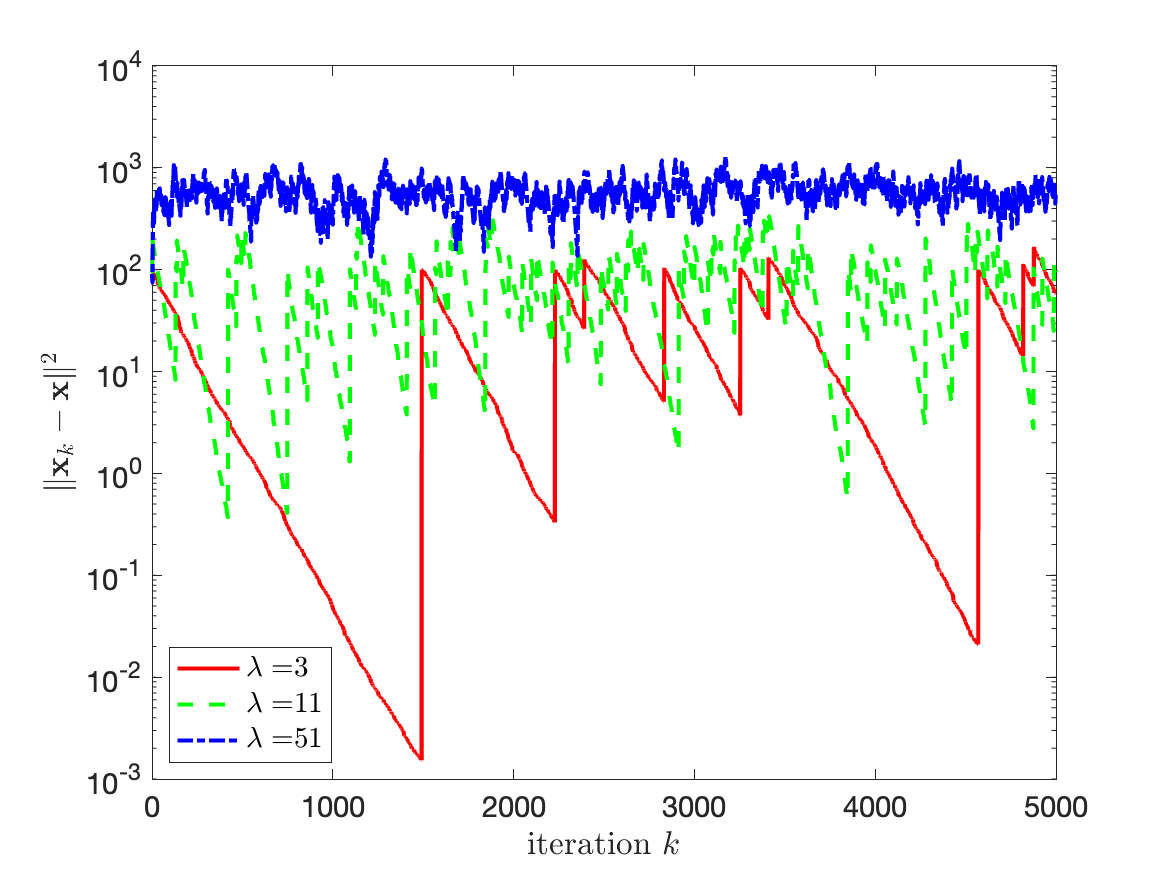}
    \caption{We present the performance of Algorithm~\ref{alg:sqrk-small} using $\lambda= \{3, 11, 51 \}$ when $ m = 50,000$, $n=100$, $\beta = 0.02$ for (left) $q = \frac{1}{\lambda}$, which corresponding to picking the smallest residual, (center) $q = \frac{1}{2}$, which corresponds to picking the median residual, and (right) $q = \frac{\lambda - 1}{\lambda}$, which corresponds to choosing the second largest residual.}\label{fig:smallsample}
\end{figure*}

\subsection{Small Samples}\label{small-samples}
The previous section considers subsets of the size on the order of $O(m)$. While it makes the quantile learning stage $\alpha^{-1}$ times faster, it still presents significant computation overhead at each step compared to the classical RK method (which steps scale linearly with $n$ and do not depend on $m \gg n$). Another approach is to approximate the quantile value from a considerably smaller, $O(1)$-size random sample $\lambda \ll m$ as outlined in Algorithm~\ref{alg:sqrk-small} below.
\begin{algorithm}
	\caption{Small Sample Quantile RK}\label{alg:sqrk-small}
	\begin{algorithmic}[1]
		\Procedure{ssQRK}{$\ve{A},\hat{\ve{b}}$, $q$, $\lambda$, $N$}
		\State{$\ve{x}_1 = \ve{0}$}
		\For{$k = 1, \ldots, N$}
		\State{Sample $\tau_k \subset [m]$ uniformly such that $|\tau_k| = \lceil \lambda \rceil $}
		\State{$\gamma_k = \text{$q$-quant}\left(\{| \langle \ve{a}_j,  \ve{x}_{k-1} \rangle - \hat{b}_j | \}_{j \in \tau_k }\right) $}
            \State{$i = \{i \in \tau_k : | \langle \ve{a}_i,  \ve{x}_{k-1} \rangle - \hat{b}_i | = \gamma_k \}$}
		\State{$\ve{x}_{k+1} = \ve{x}_{k} + (\hat{b}_i - \langle \ve{a}_i,  \ve{x}_{k} \rangle ) \ve{a}_i^T$ }
		\EndFor{}
		\Return{$\ve{x}_N$}
		\EndProcedure
	\end{algorithmic}
\end{algorithm}

We first note that this approach cannot work with completely arbitrary corruption. If the sample size is less than the number of corruptions, there is a nonzero probability that all the equations in the sample are corrupted and arbitrarily far from the true solution; thus, any choice of the next equation can ``undo" the earlier approach towards the solution. However, in many applications, it is natural to assume that some predetermined constant $ C$ bounds the maximal size of the corruption.

In such cases, the following heuristic can help quantify the convergence of the method. Let
$$
\gamma := \text{$q'$-quant}\left(\{| \langle \ve{a}_j,  \ve{x}_{k-1} \rangle - \hat{b}_j | \}_{j \in [m] }\right),
$$ be the $q'$-quantile of the full residual. 
Consider the following events:
\begin{itemize}
    \item $\mathcal{E}_1$ = selected equation $i$ is corrupted and has residual that is larger than $\gamma$
    \item $\mathcal{E}_2$ = selected equation $i$ is corrupted and has residual that is smaller than $\gamma$
    \item $\mathcal{E}_3$ = selected equation $i$ is uncorrupted.
\end{itemize}
For simplicity, let us consider the case of $q = 1/2$. We can decompose the expected error conditioned on these events:
\begin{align*}
    \mathbb{E} \|\ve{x}_{k+1} - \ve{x}\|^2 &= \mathbb{P}(i \in \mathcal{E}_1) \mathbb{E}_{i \in \mathcal{E}_1} \|\ve{x}_{k+1} - \ve{x}\|^2 \\
    & + \mathbb{P}(i \in \mathcal{E}_2) \mathbb{E}_{i \in \mathcal{E}_2} \|\ve{x}_{k+1} - \ve{x}\|^2 \\
    & + \mathbb{P}(i \in \mathcal{E}_3) \mathbb{E}_{i \in \mathcal{E}_3} \|\ve{x}_{k+1} - \ve{x}\|^2.
    \end{align*}
As in the proof of Theorem~\ref{thm:main}, we expect that $\mathbb{P}(i \in \mathcal{E}_3) \mathbb{E}_{i \in \mathcal{E}_3} \|\ve{x}_{k+1} - \ve{x}\|^2 < r'\|\ve{x}_{k} - \ve{x}\|^2$ and that the other two terms are small enough.
Indeed, the expected approximation error for event $\mathcal{E}_3$ depends on $q$. If $q \sim 0.5$, the bound on the error should be similar to that of RK. For larger $q$, we expect the contraction to be even stronger (i.e., $r'$ smaller; see similar works on the Sampling Kaczmarz-Motzkin method \cite{haddock2021greed, de2017sampling}). This suggests, as demonstrated in Figure~\ref{fig:smallsample}, that taking $q$ not too small and not too large is crucial for the successful performance of the method.

The first event has very small probability \begin{align*}
    \mathbb{P}(i \in \mathcal{E}_1) &\le \mathbb{P}(\text{ chosen residual is larger than $\gamma$ }) \\ &\le (1 - q')^{\lambda/2},
\end{align*} 
and $\|\ve{x}_{k+1} - \ve{x}_k \|$ is bounded by $C$. The expected approximation error in event $\mathcal{E}_2$ can be computed like in Lemma~\ref{lem:corruptProjection} with $q = q'$, and $\alpha = 1$.  The main difficulty is estimating the probability that $q$-th residual in a random sample is (un)corrupted, which depends on the overall distribution of the corrupted residuals in the sample. If the corrupted equations tend to have larger residuals than uncorrupted ones, we will have $\mathbb{P}(i \in \mathcal{E}_2) < \beta$ and $\mathbb{P}(i \in \mathcal{E}_3) >  1- \beta$.
In Figure~\ref{fig:smallsample}, we give empirical evidence of the convergence of the small sample Quantile RK method. We note that picking large $q$ or too small $\lambda$ results in spiking behaviour (right and middle figures), and picking too small $q$ results in very slow convergence that can be easily overtaken by a rare event of facing a large corruption (left figure). However, $q \sim 0.5$ and $\lambda = 11 \ll 7500$ (that was the smallest sample size from Figures 2-7) demonstrate successful convergence as suggested in the discussion above.

\section{Conclusion and Future Directions}
This paper considers variants of the Quantile Randomized Kaczmarz method for linear systems with corruptions which provide a computational advantage in the quantile estimation stage. We provide theoretical and empirical convergence guarantees for the case when the quantile is estimated from the sample of the size of a fraction of the total number of equations. We also consider empirically the case when the quantile is estimated from a very small sample of constant size (in particular, when it is smaller than the total number of corruptions in the system). Some interesting future directions of this work include providing theoretical analysis for the small sample size, particularly getting guidance for choosing the optimal sample size $\lambda$ and quantile $q$. Further, extending the more efficient small sample quantile ideas to the nonlinear problems with corruptions is another valuable future direction.

\section*{Acknowledgements}
We thank Jaime Pacheco (HMC) and Nestor Coria (HMC) for their assistance with the clarification of proofs and Max Collins (HMC) for the code, which helped to generate the figures in this work.

\bibliographystyle{ieeetr}
\bibliography{main}
\end{document}